\theoremstyle{plain}
\newtheorem{mainthm}{Theorem}
\newtheorem{thm}{Theorem}[subsection]
\newtheorem{cor}[thm]{Corollary}
\newtheorem{lem}[thm]{Lemma}
\newtheorem{prop}[thm]{Proposition}
\theoremstyle{definition}
\newtheorem{dfn}[thm]{Definition}
\theoremstyle{remark}
\newtheorem{rem}[thm]{Remark}
\newtheorem*{remnonum}{Remark}
\newtheorem*{qsnonum}{Question}
\theoremstyle{plain}
\newcommand{\R}{\mathbb{R}}
\newcommand{\Z}{\mathbb{Z}}
\newcommand{\Q}{\mathbb{Q}}
\newcommand{\C}{\mathbb{C}}
\newcommand{\F}{\mathbb{F}}
\newcommand{\Crit}{\textnormal{Crit\/}}
\newcommand{\Ccal}{\mathcal{C}}
\newcommand{\tor}{\text{Tor}}
\newcommand{\SP}{\mathcal{P}} %the Landau-Ginzburg superpotential
\newcommand{\fcaddress}{francois.charette8@gmail.com}
\begin{document}

\title[Orientable Lagrangian 3-manifolds]{On the cohomology ring of narrow Lagrangian 3-manifolds, quantum Reidemeister torsion, and the Landau-Ginzburg superpotential}
\date{\today}

\author{Fran\c{c}ois Charette}
\thanks{The author was supported by the Institute Mittag-Leffler and by Universit\'e de Montr\'eal.}

\address{Fran\c{c}ois Charette, Universit\'e de Montr\'eal} \email{\fcaddress}

% ----------------------------------------------------------------------
%
 %Abstract

\begin{abstract}
  Let $L$ be a  closed, orientable, monotone Lagrangian 3-manifold of a symplectic manifold $(M, \omega)$, for which there exists a local system such that the corresponding Lagrangian quantum homology vanishes.  We show that its cohomology ring satisfies a certain dichotomy, which depends only on the parity of the first Betti number of $L$.  Essentially, the triple cup product on the first cohomology group is shown to be either of maximal rank or identically zero.  This in turn influences the Landau-Ginzburg superpotential of $L$:  either one of its partial derivatives do not vanish on the corresponding local system, or it is globally constant.  We use this to prove that quantum Reidemeister torsion is invariant and can be expressed in terms of open Gromov-Witten invariants of $L$.
\end{abstract}

\maketitle

% ----------------------------------------------------------------------
%
% Beginning of text
%
% master file: 3-fold.tex
\section{Introduction and results}\label{sec:intro}
Given a closed monotone Lagrangian manifold $L$ of a symplectic manifold $(M, \omega)$, one can wonder how restricted its topology can be.  Since every closed manifold is a Lagrangian of its cotangent bundle (the zero-section),  some restrictions must be imposed on $M$ or $L$ in order to get non-trivial results.  Without restricting $M$, one can impose that $L$ is displaceable by some Hamiltonian isotopy, wich implies that its Lagrangian quantum homology $QH(L)$, in the sense of Biran-Cornea \cite{Bi-Co:rigidity}, vanishes.  However, since the work of Cho \cite{Cho:non-unitary}, a milder restriction can be imposed.  Indeed, Lagrangian quantum homology can be twisted by complex one dimensional representations, also called local systems, $\varphi\colon H_1(L;\Z)/\tor \to \C^\times$, and the resulting homology is denoted by $QH^\varphi(L)$.  Then, one considers Lagrangians endowed with a representation such that $QH^\varphi(L)$ vanishes, in which case we say that $L$ is $\varphi$-narrow.  These pairs $(L, \varphi)$ are called trivial objects of the (monotone) Fukaya category of $M$ and are the subject of this paper.  Moreover, by a theorem of Auroux, Kontsevich, Seidel, it is expected that many pairs $(L, \varphi)$ are trivial, see \S \ref{sec:superpotential} and Corollary \ref{cor:narrowexist} for more on this.  Therefore, it is important to study these trivial objects, as they give information on Lagrangians which may also admit non-narrow representations.

For a pair $(L, \varphi)$, Oh \cite{Oh:spectral} introduced a spectral sequence, starting at the singular homology of $L$ and converging to its quantum homology.  Buhovski \cite{Bu:toriaudin} then showed that this is a spectral sequence of algebras, and the algebra structure on the first page is the intersection product.  Hence, a natural question in the present context is then:
\begin{qsnonum}
 Given a trivial object $(L,\varphi)$ in the monotone Fukaya category of $(M, \omega)$, are there restrictions on the homology ring of $L$?
\end{qsnonum}
We focus our attention on closed orientable 3-manifolds.  In this case, there is a 3-form given by the triple intersection product
$$I_R\colon H_2(L;R)\otimes H_2(L;R)\otimes H_2(L;R) \to H_0(L;R)\cong R$$
Here, $R$ is a ring which will be mostly $\Z$ or a field $\F$.  This form was studied by Sullivan in \cite{Sull:3foldring}, who showed that any integral 3-form on a free abelian group of finite rank $b$ can be realized as the intersection form $I_\Z$ of a closed, orientable 3-manifold with second Betti number equal to $b$.

If one considers the symplectic manifold $(M, \omega) = (\C^3, \omega_0 = \sum dx_j\wedge dy_j)$, then there are strong topological restrictions on its orientable Lagrangians, and the corresponding 3-form has maximal rank.
\begin{thm}
\begin{itemize}
 \item (Damian \cite[Theorem 1.5]{Dam:top-mon-Lags}, Evans-K\c{e}dra \cite[Theorem B]{Ev-Ked:monotoneLags})  If $L$ is a closed, orientable and monotone Lagrangian 3-manifold of $(\C^3, \omega_0)$, then $L$ is diffeomorphic to a product $S^1\times \Sigma_g$, where $\Sigma_g$ is an orientable surface of genus $g$.
 \item (Fukaya \cite[Theorem 11.1]{Fu:NATO})  If $L$ is a closed, orientable and prime Lagrangian 3-manifold of $(\C^3, \omega_0)$, then $L$ is diffeomorphic to a product $S^1\times \Sigma_g$.
\end{itemize} 
\end{thm}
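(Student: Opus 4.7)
The plan is to leverage the fact that any closed Lagrangian in $(\C^3,\omega_0)$ is displaceable by a generic translation, hence $QH^\varphi(L)=0$ for every local system $\varphi$, so $(L,\varphi)$ is a trivial object of the Fukaya category in the strongest sense. Orientability together with monotonicity forces the minimal Maslov number $N_L$ to be even, and a standard argument with pseudoholomorphic disks in $\C^n$ gives $N_L=2$. The problem then reduces to extracting topological information from this vanishing.

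First, I would observe that by Gromov's theorem on the non-existence of closed exact Lagrangians in $\C^n$, we have $H^1(L;\R)\ne 0$, hence $b_1(L)\ge 1$, and I may choose a smooth map $f\colon L\to S^1$ representing a non-trivial integral class. I would then apply Oh's spectral sequence $H_*(L;\k)\Rightarrow QH^\varphi(L)=0$ together with Buhovski's theorem that this is a spectral sequence of algebras whose first page carries the intersection product. With $N_L=2$ the differential $d_1$ shifts degree by $+1$ and is a derivation of the intersection product; its collapse to zero, combined with Poincar\'e duality for the closed orientable $3$-manifold $L$, should yield strong constraints on the cup product on $H^1(L)$.

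To promote these constraints to a diffeomorphism statement, Damian's approach lifts the pearl complex to the infinite cyclic cover $\tilde L_f$ determined by $f$; the vanishing of the corresponding lifted Floer invariants forces $\tilde L_f$ to be homotopy equivalent to a closed surface. Stallings' fibration theorem then upgrades this to a genuine fibration $L\to S^1$ with fiber $\Sigma_g$. The Evans--K\c{e}dra variant invokes group-theoretic rigidity for $\pi_1(L)$ via Lagrangian inclusions in $\C^3$. For Fukaya's prime-manifold version, monotonicity is replaced by $A_\infty$ bounding-cochain machinery, and primeness rules out non-trivial connect-sum decompositions, so one again lands on $S^1\times\Sigma_g$.

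The main obstacle I anticipate is the final splitting step: ruling out non-trivial monodromy to upgrade the surface bundle over $S^1$ to an honest product. Ring-theoretic data alone cannot distinguish mapping tori from products, so one must exploit finer Floer-theoretic information---either via the lifted pearl complex as in Damian or via bounding cochains as in Fukaya---precisely the type of refined data that the dichotomy of the present paper is designed to control.
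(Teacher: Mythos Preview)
This theorem is not proved in the paper at all: it is quoted in the introduction as known background, with citations to Damian, Evans--K\c{e}dra, and Fukaya, and no argument is supplied. So there is no ``paper's own proof'' to compare your proposal against.

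Your sketch is a reasonable high-level summary of how the cited works proceed (displaceability $\Rightarrow$ vanishing Floer homology, lift to an infinite cyclic cover, Stallings' fibration criterion, etc.), though it remains a sketch rather than a proof. One caution: your closing sentence suggests that the dichotomy of the present paper is the tool that controls the splitting step. It is not. The results of this paper are purely cohomological---they constrain the triple cup product and yield rational primeness when $b_1$ is odd---and are strictly weaker than a diffeomorphism classification. In particular, they cannot distinguish a mapping torus of $\Sigma_g$ from the product $S^1\times\Sigma_g$, so the paper's methods do not recover the cited theorem. The splitting step in Damian's argument relies on the specific structure of the lifted Floer complex over $\Z[t,t^{-1}]$, not on the cup-product dichotomy developed here.
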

In this paper, we consider any monotone 6-dimensional symplectic manifold $(M^6, \omega)$, convex at infinity whenever it is not closed.  $L$ will always be a closed, orientable, monotone, Lagrangian 3-manifold in $(M, \omega)$.  In order to state our result, we first introduce quickly the main characters of our story.  The precise definitions are given further below.
\subsubsection{Representations, Landau-Ginzburg superpotential: see \S \ref{sec:superpotential}}
Let $\hom_0(H_1; \C^\times)$ be the set of complex one-dimensional representations of $H_1(L;\Z)$ that are equal to 1 on the torsion subgroup.  By choosing a basis $\bar{e}_1, \dots, \bar{e}_{b_1(L;\Z)}$ of the free part of $H_1(L;\Z)$ and a corresponding dual basis $z_1, \dots, z_{b_1(L;\Z)}$, $\hom_0(H_1; \C^\times) \cong (\C^\times)^{b_1(L;\Z)}$ is identified with a complex torus of dimension $b_1(L;\Z)$.  On this space, the Landau-Ginzburg superpotential is a holomorphic Laurent polynomial $\SP \colon \hom_0(H_1; \C^\times) \to \C$ which encodes Maslov index two open Gromov-Witten invariants of $(M, L)$ with one marked point.

\subsubsection{Twisted Lagrangian quantum homology and associated invariants: see \S \ref{sec:pearlcomplex}}
Let $\varphi \in \hom_0(H_1; \C^\times)$, and consider a triple $\mathcal{D} = (f, g, J)$ made of a Morse-Smale function $f\colon L \to \R$, a Riemannian metric $g$, and an almost complex structure $J$ compatible with the symplectic form $\omega$.  Then, there is an associated pearl complex $\Ccal^\varphi(\mathcal{D})$ whose homology, $QH^\varphi(L)$, called Lagrangian quantum homology, is independent of the triple $\mathcal{D}$ but depends on $\varphi$.  There is a spectral sequence $E^{*, \varphi}$, first defined by Oh \cite{Oh:spectral}, whose first page is given by $H_*(L;\C)$ and which converges to $QH^\varphi(L)$.  The space of wide representations is defined as $\mathcal{W}_1 = \{ \varphi \in \hom_0(H_1; \C^\times) \; | \; QH^\varphi(L) \cong H_*(L;\C)\}$.  Similarly, the space of narrow representations is $\mathcal{N}_1 = \{ \varphi \in \hom_0(H_1; \C^\times) \; | \; QH^\varphi(L)=0 \}$.  The narrow/wide conjecture states that the complement of $\mathcal{W}_1$ is $\mathcal{N}_1$.  The discriminant of $L$ (Biran-Cornea \cite[\S 4.3]{Bi-Co:lagtop}) is the function 
$$\Delta\colon \mathcal{W}_1 \to \C$$
$$\varphi=(z_1, \dots z_{b_1(L;\Z)}) \mapsto (-1)^{nb_1(L;\Z) + 1}z_1^2 \cdots z_{b_1(L;\Z)}^2 \det \left( \dfrac{\partial^2 \SP}{\partial z_i \partial z_j} (\varphi) \right)$$

\subsubsection{Quantum Reidemeister torsion: see \S \ref{sec:pearltorsion}}
Given a narrow representation $\varphi \in \mathcal{N}_1$, the associated pearl complex $\Ccal^\varphi(\mathcal{D})$ is acyclic, therefore its torsion is defined.  It is a complex number denoted by $\tau_\varphi(L, \mathcal{D})$.  We started the study of this quantity in our previous work \cite{Cha:torsion}, where we proved that it does not depend on generic choice of data $\mathcal{D}$ whenever the spectral sequence $E^{*, \varphi}$ collapses at the second page.  In the current paper, we prove that it is also invariant for orientable 3-manifolds and express it as a certain rational function of open Gromov-Witten invariants.  

\subsubsection{Discs with pointwise constraints and open Gromov-Witten invariants: see \S \ref{sec:torsionandqproduct}}
Given two cycles $x \in H_1(L;\Z)$, $y\in H_2(L;\Z)$ and a homotopy class $B\in \pi_2(M, L)$ of minimal Maslov index two, recall that $GW_{0,2}^B(x,PD(y)) \in \Z$ denotes genus zero open Gromov-Witten invariants, in the homotopy class $B$, with two marked points intersecting the cycles $x$ and $PD(y)$, the Poincar\'e dual of $y$.  Fix also a representation $\varphi \in \hom_0(H_1; \C^\times)$.  These define a linear map 
$$A_\varphi\colon H_1(L;\C) \to H_2(L;\C)$$
$$x \mapsto \sum_{B, \; y \in H_2} GW_{0,2}^B(x,PD(y))\varphi(\partial B) y$$
Similarly, given a fixed Morse function $f_m$, there is a bilinear map
$$Q_\varphi\colon H^{f_m}_2(L;\C)\otimes H^{f_m}_2(L;\C) \to \C \cong H^{f_m}_3(L;\C)$$
that is defined roughly by counting the number, weighted by $\varphi$, of pseudoholomorphic discs with three boundary marked points intersecting, in clockwise order, the cycles $w, z$ and $PD([L])$, where $[L]$ is the fundamental class of $L$.
\begin{remnonum}
The actual definition of $Q_\varphi$ will be given in \S \ref{sec:minimal}.  It involves four different Morse functions and is defined by the quantum product on the minimal pearl complex.  We prove in Proposition \ref{prop:detQ} that the determinant of $Q_\varphi$ is independent of these choices, whenever $\varphi$ is a narrow representation such that $E^{*, \varphi}$ collapses at the third page.  Related results were obtained previously for wide representations by Biran-Cornea \cite{Bi-Co:lagtop}.
\end{remnonum}

\subsection{Main result}\label{sec:mainresults}
Let $e_1, \dots e_{b_2(L;\Z)}$ denote a basis of the free part of $H_2(L;\Z)$, such that $\bar{e}_i \in H_1(L;\Z)$ is Poincar\'e dual to it.  Finally, denote by $\tor_{\text{ev}}(L)$ (resp.\ $\tor_{\text{odd}}(L)$) the torsion subgroup of $\oplus_{k} H_{2k}(L;\Z)$ (resp.\ $\oplus_{k} H_{2k+1}(L;\Z)$).  All determinants in the theorem are computed with respect to the basis $e_i, \; \bar{e_j}$ (see also Lemma \ref{lem:zbasisf}).
\begin{mainthm}\label{thm:main}
 Let $L$ be a monotone, closed and orientable Lagrangian 3-manifold in $(M, \omega)$, with minimal Maslov class $N_L=2$.  Suppose that there exists a narrow representation $\varphi = (z_1, \dots, z_{b_1(L;\Z)})$.  There is the following alternative:
\begin{enumerate}
 \item \label{thm:mainodd}$b_1(L;\Z)$ is odd: then $E^{*, \varphi}$ collapses at the second page, there exists $i$ such that $\frac{\partial \SP}{\partial z_i}(\varphi) \neq 0$ and $I_\C(e_i, \cdot, \cdot)$ is symplectic on $H_2(L;\C)/ \langle e_i\rangle$. If moreover $b_1 \geq 3$, then $\mathcal{W}_1 = \Crit \SP$.  Quantum Reidemeister torsion is given by the formula $$\tau_\varphi(L) = \dfrac{|\text{Tor}_{\text{ev}}(L)|}{|\text{Tor}_{\text{odd}}(L)|}\cdot \dfrac{(z_i\frac{\partial \SP}{\partial z_i}(\varphi))^{b_1(L;\Z)-3}}{\det I_\C(e_i, \cdot, \cdot)|_{H_2/ \langle e_i \rangle}} \in \C^\times / \pm 1$$
 \item \label{thm:maineven} $b_1(L;\Z)$ is even: then $E^{*, \varphi}$ collapses at the third page, $I_\C = 0$ and $\SP$ is constant.  If moreover $\mathcal{W}_1 \neq \emptyset$, then the discriminant of $L$ satisfies $\Delta(\mathcal{W}_1) = 0$.
Finally, $A_\varphi$ is an isomorphism, $Q_\varphi$ is symplectic, $\det Q_\varphi$ depends only on $\varphi$ and 
$$\tau_\varphi(L)^{b_1(L;\Z)}= \left(\dfrac{|\tor_{\text{ev}}(L)|}{|\tor_{\text{odd}}(L)|}\right)^{b_1(L;\Z)} \dfrac{\det A_\varphi^{b_1(L;\Z)-1}}{\det Q_\varphi} \in \C^\times / \pm 1$$
\end{enumerate}
\end{mainthm}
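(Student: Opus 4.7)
The plan is to translate the vanishing of $QH^\varphi(L)$ into constraints on the Oh-Biran-Cornea spectral sequence $E^{*,\varphi}$ and then exploit Buhovski's derivation property for $d_1$ with respect to the cup product. Working in cohomology $H^*(L;\C)$ via Poincar\'e duality, and using that $\dim L = 3$ and $N_L = 2$, the only potentially nonzero differentials on $E^{*,\varphi}$ are $d_1$ (of cohomological degree $-1$) and $d_2\colon H^3(L;\C) \to H^0(L;\C)$, so $H^*(L;\C)$ must be killed in at most two stages. Write $\alpha := d_1|_{H^1}$, $D_1 := d_1|_{H^2}$, $\beta := d_1|_{H^3}$, and $\hat\beta := \beta(\omega)$ for $\omega$ the generator of $H^3(L;\C)$ dual to $[L]$.

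First I would establish a Poincar\'e self-duality for $d_1$: applying Leibniz to a cup product of total cohomological degree $4$ (necessarily zero in $H^*(L;\C)$) yields $\langle d_1(x) \cup y,[L]\rangle = (-1)^{|x|+1}\langle x \cup d_1(y),[L]\rangle$. Two consequences follow: $\alpha = 0$ iff $\beta = 0$, and the bilinear form $B(x,y) := \langle D_1(x) \cup y,[L]\rangle$ on $H^2(L;\C)$ is antisymmetric, so $\operatorname{rk}(D_1) = \operatorname{rk}(B)$ is even. A rank count for $E_\infty^{*,\varphi} = 0$ then gives the alternative: either (I) $\alpha, \beta \neq 0$ with $E_2^{*,\varphi} = 0$ and $\operatorname{rk}(D_1) = b_1 - 1$, or (II) $\alpha = \beta = 0$ with $D_1$ an isomorphism and $d_2$ a nonzero iso $H^3 \to H^0$. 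Evenness of $\operatorname{rk}(D_1)$ forces the parity dichotomy: case (I) requires $b_1$ odd and case (II) requires $b_1$ even.

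The ring-theoretic conclusions come from the iterated Leibniz identities
\[
D_1(\mu_2(x,y)) = \alpha(x)\,y - \alpha(y)\,x, \qquad \mu_3(x,y,z)\,\hat\beta = \alpha(x)\mu_2(y,z) - \alpha(y)\mu_2(x,z) + \alpha(z)\mu_2(x,y),
\]
where $\mu_2,\mu_3$ denote the double and triple cup products on $H^1(L;\C)$. In case (II) the first identity forces $\mu_2(H^1,H^1) \subseteq \ker D_1 = 0$, so $\mu_2 \equiv 0$ and a fortiori $I_\C = \mu_3 \equiv 0$; combined with the identification $\alpha(\bar e_i) = z_i\,\partial \SP/\partial z_i(\varphi)$ from \S\ref{sec:superpotential}, and the fact that the parity obstruction is purely topological (hence applies at every narrow representation), $\SP$ is constant and $\Delta(\mathcal{W}_1)=0$. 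In case (I), normalize to $\alpha(\bar e_i) = 1$ for the chosen $i$. The first identity shows that $\psi\colon \ker\alpha \to H^2$, $x \mapsto \mu_2(\bar e_i,x)$, is injective with $D_1 \circ \psi = -\mathrm{id}$, so $H^2(L;\C) = \psi(\ker\alpha) \oplus \C\hat\beta$. Restricting the second identity to $\bar e_i \otimes \ker\alpha \otimes \ker\alpha$ gives $\mu_2(y,z) = \sigma(y,z)\hat\beta$ with $\sigma := \mu_3(\bar e_i,\cdot,\cdot)$; a kernel vector $y_0$ of $\sigma$ on $\ker\alpha$ would force $\psi(y_0) \cup H^1 = 0$, contradicting perfectness of Poincar\'e duality. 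Hence $\sigma = I_\C(e_i,\cdot,\cdot)$ is symplectic on $\ker\alpha \cong H_2(L;\C)/\langle e_i\rangle$, and for $b_1 \geq 3$ the resulting rigidity combined with the inclusion $\mathcal{W}_1 \subseteq \Crit\SP$ identifies $\mathcal{W}_1 = \Crit\SP$.

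For quantum Reidemeister torsion I would invoke multiplicativity of torsion across the pages of $E^{*,\varphi}$, as developed in \cite{Cha:torsion}, reducing $\tau_\varphi(L)$ to the torsion of the acyclic complex $(H^*(L;\C),d_1)$ in case (I) and to combined contributions of $D_1$ and $d_2$ on the minimal model of \S\ref{sec:minimal} in case (II); the ratio $|\tor_{\text{ev}}(L)|/|\tor_{\text{odd}}(L)|$ enters via universal coefficient comparison between integral and complex pearl complexes. In case (I), diagonalizing $D_1$ on the splitting $\psi(\ker\alpha) \oplus \C\hat\beta$ via $\mu_2 = \sigma\hat\beta$ yields the stated rational function of $z_i\,\partial \SP/\partial z_i(\varphi)$ and $\det I_\C(e_i,\cdot,\cdot)$. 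In case (II), identifying $D_1$ with $A_\varphi$ and $d_2$ with $Q_\varphi$ on the minimal model, together with Proposition \ref{prop:detQ}, gives the stated $b_1$-th power formula after balancing the two contributions. The main obstacle will be case (II): propagating the pointwise conclusion $\nabla\SP(\varphi) = 0$ to global constancy of $\SP$ via a density or polynomial argument on $\mathcal{N}_1$, and tracking normalization constants carefully so that $\tau_\varphi(L)$ is independent of the auxiliary data $\mathcal{D}$.
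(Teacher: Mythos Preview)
Your spectral–sequence and Leibniz analysis is essentially the paper's argument, dualized to cohomology. One point is genuinely cleaner than the paper: you extract the parity of $b_1$ directly from the antisymmetry of $B(x,y)=\langle D_1(x)\cup y,[L]\rangle$ on $H^2$, so that $\operatorname{rk}D_1$ is even in both alternatives. The paper instead proves ``$v=3\Rightarrow b_2$ even'' only after building the chain-level quantum form $Q_\varphi$ on the minimal complex and showing it is symplectic (Proposition~\ref{prop:detQ}); your observation bypasses that detour for the parity statement. Your argument for $\SP$ constant is also the paper's: any $\varphi'\notin\Crit\SP$ is narrow by Proposition~\ref{prop:BiCo}, hence would force $v=2$ and $b_1$ odd, a contradiction—so no density argument on $\mathcal N_1$ is needed, and your stated ``main obstacle'' dissolves.

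Where the proposal has a real gap is the torsion computation in case (II). Two points. First, \cite{Cha:torsion} only proves invariance of $\tau_\varphi(L,\mathcal D)$ and the page–by–page multiplicativity when $E^{*,\varphi}$ collapses at the \emph{second} page; the third–page case is new here and is the content of \S\ref{sec:torinv3rdpage}. There the paper does not appeal to an abstract multiplicativity principle but performs a direct block computation of $d^\varphi$ in the basis $h_{[*]}b^M_{[*]}b^M_{[*-1]}$, identifies the second-page differential explicitly as $r_\varphi=\alpha-M_6M_1$ by unwinding the filtration, and obtains $\tau_\varphi(L)=\frac{|\tor_{\mathrm{ev}}|}{|\tor_{\mathrm{odd}}|}\cdot\frac{\det A_\varphi}{r_\varphi}$ (Theorem~\ref{thm:torsionformula}). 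The a posteriori remark that this equals $\prod_k\tau(E^{k,\varphi},h_*)$ relies on the accident that in this low dimension the natural bases on successive pages are all $h_*$; it is not a theorem you can cite. Second, your identification ``$d_2=Q_\varphi$'' is off: $d_2$ is the scalar $r_\varphi\colon H_0\to H_3$, not the bilinear form $Q_\varphi$ on $H_2$. The passage from $r_\varphi$ to $Q_\varphi$ is the computation $\delta_{i,j}\,r_\varphi=\delta^\varphi(\bar e_i\circ e_j)=Q_\varphi(A_\varphi\bar e_i,e_j)$ via the \emph{quantum} Leibniz rule on the minimal complex (equation~\eqref{eq:formularphi}), which then gives $\det Q_\varphi=r_\varphi^{\,b_1}/\det A_\varphi$ and hence the $b_1$-th power formula. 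So for case (II) you still need both the matrix/invariance argument of \S\ref{sec:torinv3rdpage} and the quantum-product identity of \S\ref{sec:torsionandqproduct}; neither is subsumed by your $d_1$-Leibniz identities or by \cite{Cha:torsion}.
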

\begin{cor}\label{cor:main}
Suppose that $b_1$ is odd and $L$ satisfies the hypotheses of the theorem, then $L$ is rationally prime, i.e.\ if $L$ is a connected sum, $L=A \# B$, then either $A$ or $B$ is a $\Q$-homology sphere.
\end{cor}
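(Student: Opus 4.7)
The plan is to exploit the very strong non-degeneracy statement in part \ref{thm:mainodd} of Theorem \ref{thm:main}, namely that there exists $i$ with $I_\C(e_i,\cdot,\cdot)$ symplectic on $H_2(L;\C)/\langle e_i\rangle$, and show that this is incompatible with a non-trivial rational connected sum decomposition. Assume for contradiction that $L = A\# B$ with neither $A$ nor $B$ a $\Q$-homology sphere. Since $A$ and $B$ are closed orientable 3-manifolds, Poincar\'e duality yields $b_2(A;\Q) = b_1(A;\Q) > 0$ and similarly for $B$.

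First I would record the standard Mayer--Vietoris computation showing that, in degrees $1$ and $2$,
$$H_*(L;\C) \cong H_*(A;\C) \oplus H_*(B;\C),$$
so that one may choose a basis $e_1,\dots,e_{b_2(L;\C)}$ of $H_2(L;\C)$ split as the concatenation of bases of $H_2(A;\C)$ and $H_2(B;\C)$. Next, I would verify the key multiplicative fact that the triple intersection product $I_\C$ on $L$ restricted to mixed inputs vanishes: any $2$-cycle coming from $A$ can be represented inside $A\setminus D^3$ and any $2$-cycle from $B$ inside $B\setminus D^3$, so representatives from the two summands can be made disjoint, and any triple containing classes from both summands has zero triple intersection. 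Equivalently, under the decomposition, $I_\C = I_{\C,A}\oplus I_{\C,B}$ as trilinear alternating forms.

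Now invoke part \ref{thm:mainodd} of Theorem \ref{thm:main} to obtain an index $i$ with $I_\C(e_i,\cdot,\cdot)$ symplectic on $H_2(L;\C)/\langle e_i\rangle$. Without loss of generality, $e_i\in H_2(A;\C)$. By the previous step, for any $\beta\in H_2(B;\C)$ and any $\gamma\in H_2(L;\C)$ one has $I_\C(e_i,\beta,\gamma) = 0$, so $H_2(B;\C)$ lies in the radical of $I_\C(e_i,\cdot,\cdot)$. Since $e_i \notin H_2(B;\C)$, the image of $H_2(B;\C)$ in the quotient $H_2(L;\C)/\langle e_i\rangle$ has dimension $b_2(B;\C) > 0$, contradicting non-degeneracy. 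Symmetrically, $e_i\in H_2(B;\C)$ forces $A$ to be a $\Q$-homology sphere. I expect the only mildly delicate step to be making the geometric disjointness argument for the triple intersection rigorous; once that is in place, the rest is a purely linear-algebraic obstruction extracted from Theorem \ref{thm:main}.
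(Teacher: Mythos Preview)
Your proposal is correct and is exactly the natural argument the paper has in mind; in fact the paper does not spell out a proof at all, stating the result as a ``direct corollary'' of the fact that $I_\F(e_i,\cdot,\cdot)$ is symplectic on $H_2/\langle e_i\rangle$. Your write-up is a faithful expansion of that implicit argument: the Mayer--Vietoris splitting, the block-diagonality $I_\C = I_{\C,A}\oplus I_{\C,B}$, and the observation that a nonzero summand $H_2(B;\C)$ would sit in the radical of $I_\C(e_i,\cdot,\cdot)$ when $e_i$ comes from the $A$-side.
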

Note that if $N_L > 2$ and there exists a narrow representation, then $L$ must be a rational homology sphere and $N_L=4$, see e.g.\ Damian \cite[Theorem 1.4]{Dam:Audinsconj} or Fukaya-Oh-Ohta-Ono \cite{FO3:book-vol1}.   

Except for the superpotential, we prove the theorem over $\C$ as well as other fields, with the restriction that the characteristic is different than two and does not divide the order of the torsion subgroup of $H_*(L;\Z)$.  The proof is done in many different steps, spread through the whole paper, and is summarized in \S \ref{sec:mainproof}.  Let us give a list of the steps and point to the relevant sections.\\

\noindent \textit{Step 1:  The spectral sequence and the homology ring.}  In \S \ref{sec:ring}, we prove that if $E^{*, \varphi}$ collapses at the second page, then $I_\C(e_i, \cdot, \cdot)$ is symplectic on $H_2(L;\C)/ \langle e_i\rangle$, from which we get that $b_1(L;\Z)$ is odd, as well as Corollary \ref{cor:main}.  Furthermore, if $E^{*, \varphi}$ collapses at the third page, we get $I_\C = 0$.  Note that this does not imply, a priori, that $b_1(L;\Z)$ is even.  This is done in Step 3.\\

\noindent \textit{Step 2: The invariance of torsion.}  In \S \ref{sec:torinv}, we prove that quantum Reidemeister torsion does not depend on the choice of generic pearl data triple $\mathcal{D}$.  The most interesting part is when the spectral sequence $E^{*, \varphi}$ collapses at the third page, where we discuss briefly bifurcation analysis.  We also give formulas for $\tau_\varphi(L)$ in terms of the differentials in $E^{*, \varphi}$.\\

\noindent \textit{Step 3: Open Gromov-Witten invariants and quantum product.}  In \S \ref{sec:torsionandqproduct}, we express the formulas from Step 2 in terms of the open Gromov-Witten invariants $A_\varphi$ and the chain-level quantum product $Q_\varphi$.  Moreover, we show that $A_\varphi$ is an isomorphism and $Q_\varphi$ is a symplectic form on $H_2(L;\C)$, provided that $E^{*, \varphi}$ collapses at the third page.  This proves that $b_1(L;\Z)$ must be even in this case.\\

\noindent \textit{Step 4: The superpotential.} Finally, we translate our results in terms of the Landau-Ginzburg superpotential in \S \ref{sec:superpotential}.

\subsection{Examples}\label{sec:examples}
The most interesting conclusion of Theorem \ref{thm:main} concerns Lagrangians with an even first Betti number.  Unfortunately, we have to confess that the only examples of this type we could find are rational homology spheres.  It seems that real parts of Fano 3-folds may provide a source of non trivial examples, see the discussion below.
\subsubsection{Odd first Betti number}
These are rather easy to come by.  Indeed, let $S^1$ be a contractible, monotone, circle in a Riemann surface $(\Sigma, \omega_1)$, possibly not compact.  Let $\Sigma_g$ be a monotone, closed, orientable Lagrangian surface in a symplectic 4-manifold $(M^4, \omega_2)$.  Then the product embedding $S^1\times \Sigma_g \subset (\Sigma \times M, \omega_1 \oplus \omega_2)$ admits a narrow representation and has minimal Maslov class two.

\subsubsection{Even first Betti number}
The Chiang Lagrangian \cite{Chiang:RP3} is a rational homology sphere in $\C P^3$ with minimal Maslov class two.  Its Lagrangian quantum homology was studied extensively by Evans-Lekili \cite{Ev-Lek:ChiangLag} and vanishes over fields of characteristic zero, hence the hypotheses of Theorem \ref{thm:main} are satisfied.  Other Chiang-type examples were further studied by Smith \cite{SmiJ:ChiangtypeLag}.

There are many closed orientable $3$-manifolds $L$ with even first Betti numbers whose cohomology ring satisfies $I_\C = 0$.  For example, any orientable circle bundle with a non-vanishing Euler class, or more generally, Seifert fibered spaces with a non-vanishing Euler class.  These can also be made Lagrangian, albeit not monotone, as real parts of complex 3-folds, see Mangolte \cite[Theorems 2.13 \& 4.6]{Man:Real3folds} for an extensive survey.  However, the methods mentionned there do not seem to work for Fano 3-folds, whose real parts would yield monotone Lagrangians.  One would then need to check that they admit narrow representations, so that the hypotheses of Theorem \ref{thm:main} are satisfied.  We will not pursue this direction here.

\subsection{Acknowledgments}
Most of this research was done during a stay at the Institute Mittag-Leffler in the Fall semester 2015, where I benefited from conversations with many of the participants.  I would like to thank the Institute and the organizers of this special thematic semester for the financial support and for providing such an excellent working environment.
% master file: 3-fold.tex
\section{Definitions and setting}\label{sec:setting}
This whole section follows very closely the presentation of similar material in our previous work \cite[\S 2 and 3]{Cha:torsion}.
\subsection{The torsion of a chain complex}\label{sec:torsion}
We adapt Milnor's presentation from \cite{Mil:torsion} to our context.
\subsubsection{Non-acyclic complexes: torsion subgroups}\label{sec:milnorsdef}
Let $0 \to C_n \to C_{n-1} \to \cdots \to C_1 \to C_0 \to 0$ be a bounded chain complex over a field $\mathbb{F}$ such that each $C_i$ has a preferred finite basis $c_i=(c_{i,1}, \dots, c_{i, r_i})$.  Given any vector space $V$, we call two bases $b_1, b_2$ of $V$ equivalent if the change of basis matrix $[b_1/b_2]$ has determinant plus or minus one.

Denote by $B_i$ the image of the boundary morphism $d\colon C_{i+1} \to C_i$, by $Z_{i+1}$ its kernel and the resulting homology by $H_i(C_*) = Z_i/B_i$.  Choose bases $b_i$ of $B_i$ and $h_i$ of $H_i$. Then, there are split exact sequences
\begin{align}\label{eq:exactseqmilnortorsion}
&\xymatrix{0 \ar[r] & Z_i \ar[r] & C_i \ar[r]^d & B_{i-1} \ar[r] & 0}\\
&\xymatrix{0 \ar[r] & B_i \ar[r] & Z_i \ar[r] & H_i \ar[r] & 0} \nonumber
\end{align}
which combine to yield a new basis $b_{i-1} h_i b_{i}$ of $C_i$.  
\begin{dfn}\label{eq:nonacyclictorsion}
The torsion is defined as
$$\tau(C_*, c_*, h_*) = \prod_{i=0}^n \det [h_i b_i b_{i-1}/c_i]^{(-1)^i} \in \mathbb{F}^{\times}/ \pm 1.$$
\end{dfn}
It is independent of the choice of basis $b_i$ and of the splittings of the exact sequences, however it does depend on the equivalence class of the bases $c_*$ and $h_*$, since
\begin{equation}\label{eq:torsionchangebasis}
\tau(C_*, c'_*, h'_*) = \tau(C_*, c_*, h_*)\prod_{i=0}^n\det[c_i/c'_i]\det[h_i/h'_i]
\end{equation}

Now, let $f\colon X \to \R$ be a Morse-Smale function on a closed manifold $X$.  Denote by $C_i$ the Morse complex over $\Z$ generated by critical points of $f$ of index $i$, with a $\Z$-basis $c_i$ given by these critical points.  As above, let $B_i$ and $Z_i$ be the groups of boundaries and cycles, which are free $\Z$-modules, say $B_i \cong \Z^{k(i)}$ and $Z_i \cong \Z^{r(i)}$.  Write $H_i(L;\Z) = H_i^{\text{free}}\oplus \tor(H_i(L;\Z)) = \Z^{r(i) - k(i)}\oplus \Z/ a_1 \Z \oplus \cdots \oplus \Z/ a_{s(i)} \Z$.  By standard algebra for modules over principal ideal domains, one can choose bases of $Z_i$ and $B_i$ such that, in the second exact sequence of (\ref{eq:exactseqmilnortorsion}), we have
\begin{align}\label{eq:trivialext}
\Z^{k(i)}=B_i & \to Z_i=\Z^{r(i)} \nonumber\\ 
b_l & \mapsto 
\begin{cases}
a_l z_l \quad 1 \leq l \leq s(i)\\
z_l     \quad s(i) < l \leq k(i) 
\end{cases}
\end{align}
In other words, there is only one free extension (of a given rank) of $H_i(L;\Z)$ by another free module (of a given rank), and, in the appropriate bases, it is given by the obvious maps written above.  Moreover, the relevant change of bases matrices have determinants equal to $\pm 1$, since they give isomorphisms of free $\Z$-modules.

Now, take a field $\F$ whose characteristic does not divide any of the $a_l$'s and tensor both sequences in (\ref{eq:exactseqmilnortorsion}) with $\F$.  This preserves exactness, boundaries, and cycles, by assumption on the characteristic.  The following is rather trivial, it follows for instance by applying the universal coefficient theorem for homology, but it will be useful for computing quantum Reidemeister torsion later on.
\begin{lem}\label{lem:zbasisf}
 Let $\F$ be a field whose characteristic does not divide $|\tor (H_*(L;\Z))|$.  Then a $\Z$-basis $h_*$ of the free part of $H_*(L;\Z)$ gives a basis of $H_*(L;\F)$, by taking $h_*\otimes 1$.  Moreover, any two such bases are equivalent.
\end{lem}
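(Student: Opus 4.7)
The plan is to deduce the statement directly from the universal coefficient theorem for homology together with the hypothesis on the characteristic of $\F$. First I would write
\[
H_i(L;\F) \;\cong\; H_i(L;\Z)\otimes_\Z \F \;\oplus\; \tor_\Z\bigl(H_{i-1}(L;\Z),\F\bigr),
\]
and then argue that under our hypothesis on $\text{char}(\F)$, both the $\tor$ term and the torsion part of $H_i(L;\Z)\otimes \F$ vanish. Indeed, every cyclic summand of $\tor(H_*(L;\Z))$ is of the form $\Z/a_l\Z$ with $a_l$ invertible in $\F$, so $\Z/a_l\Z \otimes_\Z \F = 0$ and $\tor_\Z(\Z/a_l\Z, \F)=0$. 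This leaves $H_i(L;\F) \cong H_i^{\text{free}}(L;\Z) \otimes_\Z \F$, so a $\Z$-basis $h_*$ of the free part tensored with $1$ is an $\F$-basis of $H_i(L;\F)$ by the standard fact that tensoring a free $\Z$-module of rank $r$ with $\F$ yields an $\F$-vector space of dimension $r$ with the induced basis.

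For the second assertion, I would observe that any two $\Z$-bases of the free abelian group $H_i^{\text{free}}(L;\Z)$ are related by an integral change of basis matrix $P\in GL_{r(i)}(\Z)$, and that any such $P$ has $\det P \in \{\pm 1\}$. Tensoring with $\F$ does not change the determinant of $P$ (its entries, viewed in $\F$, give the change of basis matrix between the two induced $\F$-bases), so the two resulting $\F$-bases are equivalent in the sense of \S\ref{sec:milnorsdef}.

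I do not anticipate any real obstacle here; the entire argument is a direct bookkeeping with UCT and the definition of equivalence of bases. The only subtlety worth emphasizing explicitly, for later use in Reidemeister torsion computations, is that the equivalence holds in $\F^\times/\pm 1$ precisely because integral changes of basis have determinant $\pm 1$, which is exactly what is required by \eqref{eq:torsionchangebasis}.
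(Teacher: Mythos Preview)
Your proposal is correct and matches the paper's own justification: the paper remarks that the lemma ``follows for instance by applying the universal coefficient theorem for homology,'' which is exactly the argument you give, and the equivalence claim is the obvious observation about $GL(\Z)$. There is nothing to add.
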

Then, the second sequence (considered over $\F$) in (\ref{eq:exactseqmilnortorsion}) is now free, hence it splits, and is given by
(\ref{eq:trivialext}) in the basis $h_*\otimes 1$.  Therefore, we get $\det [h_i b_i b_{i-1}/c_i] = \prod_{j=1}^{s(i)} a_j = |\tor H_i(L;\Z)| \in \F^\times / \pm 1.$
Finally, 
\begin{equation*}
\tau(C_* \otimes \F, c_* \otimes 1, h_* \otimes 1) = \prod_i |\tor(H_i(L;\Z))|^{-1^i}
\end{equation*}
Simply put, torsion equals torsion!  We will often abbreviate this formula as
\begin{equation}\label{eq:torsioneqtorsion}
\tau(C_* \otimes \F, c_* \otimes 1, h_*\otimes 1) = \dfrac{|\tor_{\text{ev}}(L)|}{|\tor_{\text{odd}}(L)|}
\end{equation}

\subsubsection{Periodic complexes}\label{sec:periodictorsion}
Consider now a 2-periodic chain complex $C_{[*]}$ over a field $\mathbb{F}$:
$$\xymatrix{C_{[1]} \ar@/^1pc/[r]^{d} & \ar@/^1pc/[l]^d C_{[0]}}$$
Assume that $C_{[*]}$ is acyclic, choose bases $c_{[i]}$ of $C_{[i]}$ and $b_{[i]}$ of $B_{[i]} = d(C_{[i-1]})$.
\begin{dfn}
The torsion of a 2-periodic acyclic chain complex is
$$\tau_2(C_{[*]}, c_{[*]} ) = \dfrac{\det[b_{[1]}b_{[0]}/c_{[0]}]}{\det[b_{[0]}b_{[1]}/c_{[1]}]} \in \mathbb{F}^{\times}/\pm 1.$$
\end{dfn}
It is independent of the choice of bases $b_{[i]}$, sections, and equivalence class of $c_{[*]}$.

\subsection{The pearl complex and its torsion}\label{sec:pearlcomplex}
We refer to Biran--Cornea's papers \cite{Bi-Co:Yasha-fest, Bi-Co:rigidity, Bi-Co:lagtop} for foundations and applications of Lagrangian quantum homology.  The version we use here (with oriented moduli spaces of pearls) is adapted from \cite{Bi-Co:lagtop}.

Throughout the text, $(M, \omega)$ is a $6$-dimensional symplectic manifold that is connected and convex at infinity whenever it is not closed.  The space of $\omega$-compatible almost complex structures on $M$ is denoted by $\mathcal{J}_{\omega}$.  All Lagrangian submanifolds $L \subset (M, \omega)$ are closed, connected and orientable 3-manifolds.  Moreover, they are endowed with a fixed choice of orientation and spin structure which we do not write.

Let $\omega\colon \pi_2(M, L) \to \R$ be given by the symplectic area of discs and  $\mu\colon \pi_2(M, L) \to \Z$ denote the Maslov index.  The positive generator of the image of $\mu$ is called the minimal Maslov index and is denoted by $N_L$.  Lagrangians are assumed \textbf{monotone}, that is, there exists a constant $\tau > 0$ such that
\begin{itemize}
	\item $\omega = \tau \mu$
	\item $N_L \geq 2$
\end{itemize}
Since $L$ is orientable, $N_L$ is even.

\subsubsection{The 2-periodic pearl complex}\label{sec:2pearl}
Fix a triple $\mathcal{D}=(f, \rho, J)$, where $\rho$ is a Riemannian metric, $f\colon L \to \R$ a Morse-Smale function and $J \in \mathcal{J}_{\omega}$.  

Set $$\mathcal{C}_k = \mathcal{C}_k(\mathcal{D}) = \Z[\pi_2(M, L)] \langle \text{Crit}_{k}f \rangle, \quad k=0, \dots, n=\dim L,$$ where $\text{Crit}_k f$ is the set of critical points with Morse index $k$ and $\Z[G]$ is the group ring of a group $G$.  We write the Morse index of a critical point $x$ as $|x|$.  For a generic triple $\mathcal{D}$, the pearl differential is defined by
\begin{align*}
d\colon \oplus_k \mathcal{C}_{k}(\mathcal{D}) & \to \oplus_k \mathcal{C}_{k}(\mathcal{D})\\
\text{Crit f} \ni x & \mapsto \sum_{y \in \text{Crit } f}\big(\sum_{\substack{A \in \pi_2(M, L)\\|x|-|y|-1+\mu(A)=0}} \#(\mathcal{P}(x,y,A))A\big)y
\end{align*}
where $\#(\mathcal{P}(x,y,A))$ is the (signed) number of pearls in the homotopy class $A$ going from $x$ to $y$.  When $\mu(A)=0$, a pearl is simply a negative gradient flow line of $f$.  This morphism decomposes as a finite sum 
\begin{equation}\label{eq:dsum}
d = d_M + d_1 + \dots
\end{equation}
 where $d_M\colon \mathcal{C}_k \to \mathcal{C}_{k-1}$ is the Morse differential and $d_i\colon \mathcal{C}_{k} \to \mathcal{C}_{k-1 +iN_L}$ counts pearls of Maslov index $iN_L$.  Note that $d_i$'s are not differentials, they do not square to zero, even though $d^2=0$. 

Since $N_L$ is even, $k$ and $k-1 +i N_L$ have different parity, hence there is a 2-periodic pearl complex over $\Z[\pi_2(M, L)]$, defined by
$$\mathcal{C}_{[*]}(\mathcal{D})=\bigoplus_{k \equiv [*] \; \text{mod } 2} \mathcal{C}_k(\mathcal{D}), \quad [*]=0,1$$
with an induced differential $d\colon \mathcal{C}_{[*]} \to \mathcal{C}_{[*-1]}$.

The homology of this complex is called the quantum homology of $L$, denoted by $QH_{[*]}(L)$, or simply $QH(L)$.  It is independent of generic choices of $\mathcal{D}$.  If $QH(L) =0$, we say that $L$ is \textit{narrow}.

\subsubsection{Narrow representations and torsion}\label{sec:pearltorsion}
Fix a field $\mathbb{F}$ and a ring morphism (by convention, ring morphisms map 1 to 1)
$$\varphi\colon \Z[\pi_2(M, L)] \to \mathbb{F},$$
so that $\mathbb{F}$ becomes a $\Z[\pi_2(M, L)]$-module.  This defines a 2-periodic chain complex over $\mathbb{F}$ by setting
$$\mathcal{C}^{\varphi}_{[*]}(\mathcal{D})=\mathcal{C}_{[*]}(\mathcal{D})\otimes_{\Z[\pi_2(M, L)]} \mathbb{F}, \quad d^{\varphi} = d\otimes 1.$$
As above, the homology of this new complex, denoted by $QH^{\varphi}(L)$, does not depend on $\mathcal{D}$.  If it vanishes, we say that $L$ is $\varphi$-narrow, which implies $\chi(L;\F)=0$.  Notice that $QH(L)=0$ implies $QH^{\varphi}(L)=0$ for every $\varphi$.  %The product $\circ$ induces a product on this complex, also denoted by $\circ$.

The set of narrow representations of $L$ (see also \S \ref{sec:superpotential}) over $\mathbb{F}$ is defined by
$$\mathcal{N}(L, \mathbb{F})= \left\{ \varphi\colon \Z[\pi_2(M, L)] \to \mathbb{F} \; | \; L \text{ is } \varphi\text{-narrow} \right\}.$$
Given $\varphi \in \mathcal{N}(L, \mathbb{F})$ and $\mathcal{D}$ a generic set of data, there is a preferred basis for $\mathcal{C}^{\varphi}_{[*]}(\mathcal{D})$ given by $\text{Crit}_{[*]} f$.  Proceeding as in \S \ref{sec:periodictorsion}, we have:
\begin{dfn}
The quantum Reidemeister torsion of $L$ is
$$\tau_{\varphi}(L, \mathcal{D}) = \tau_2(\mathcal{C}^{\varphi}_{[*]}(\mathcal{D}), \text{Crit}_{[*]} f) \in \mathbb{F}^{\times}/\pm 1$$
\end{dfn}

\begin{remnonum}
Existence of narrow representations is discussed in \S \ref{sec:superpotential}, see Corollary \ref{cor:narrowexist}.\\
\end{remnonum}

\subsubsection{Oh's spectral sequence}\label{sec:d1complex}
In this section, we will briefly need Novikov ring coefficients in order to define Oh's spectral sequence \cite{Oh:spectral} in Lagrangian Floer homology.  In the pearl context, this corresponds to the degree spectral sequence of Biran--Cornea \cite{Bi-Co:rigidity}.

Set $\Lambda = \Lambda = \F[t, t^{-1}]$ the ring of Laurent polynomials in $t$.  We set $\deg t = |t|=- N_L$.  Set also 
$$P_i =
\begin{cases}
\F t^{-i/N_L} \quad i \equiv 0 \mod N_L\\
0 \quad \text{otherwise}
\end{cases}
$$
Given a generic pearl triple $\mathcal{D} = (f, \rho, J)$, and a representation $\varphi\colon \pi_2(M, L)\to \F^\times$, we have:
\begin{thm}[The degree spectral sequence]\label{thm:specseq}
There is a spectral sequence $\{ E_{p,q}^{r, \varphi}, d^{r, \varphi}; \Lambda \}$, with $d^{r, \varphi}$ of bidegree $(-r, r-1)$, called the degree spectral sequence, which has the following properties:
\begin{itemize}
\item $E_{p,q}^{0, \varphi} = \Ccal_{p+q-pN_L}^\varphi(\mathcal{D}) \otimes P_{pN_L}$, $d^{0, \varphi} = d_M \otimes 1$
\item $E_{p,q}^{1, \varphi} = H_{p+q-pN_L}(L; \F)\otimes P_{pN_L}$, $d^{1, \varphi} = d_{1*}^\varphi \otimes t$, where
$$d_{1*}^\varphi \colon H_r(L;\F) \to H_{r-1+N_L}(L;\F)$$
is induced from the first term $d_1^\varphi$ in the decomposition $d^\varphi = d_M + \sum_i d_i^\varphi$.
\item $d_{1*}^\varphi$ satisfies the Leibniz rule with respect to the Morse intersection product, i.e.\ $d_{1*}^\varphi(x \cdot y) = d_{1*}^\varphi(x)\cdot y +(-1)^{n-|x|}x \cdot d_{1*}^\varphi(y)$
\item $\{ E_{p,q}^{r, \varphi}, d^{r, \varphi}\}$ collapses after at most $\lfloor \frac{n+1}{N_L}\rfloor$ pages.  Moreover, it converges to $QH^\varphi(L; \Lambda)$.  In particular $\oplus_{p+q=l}E_{p,q}^\infty \cong QH_l^\varphi(L;\Lambda)$
\end{itemize}
\end{thm}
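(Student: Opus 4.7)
The plan is to construct the degree spectral sequence as the standard spectral sequence of a filtered chain complex. I would extend coefficients to $\Lambda = \F[t, t^{-1}]$ with $|t| = -N_L$ and consider the complex $(\Ccal^\varphi(\mathcal{D}) \otimes_\F \Lambda, \tilde{d})$ where $\tilde{d} = d_M \otimes 1 + d_1^\varphi \otimes t + d_2^\varphi \otimes t^2 + \cdots$. This is a well-defined differential of total degree $-1$ since each $d_i^\varphi$ shifts Morse index by $-1 + iN_L$ while $t^i$ carries degree $-iN_L$; the identity $\tilde{d}^2 = 0$ follows by collecting terms of equal $t$-exponent from $d^2 = 0$. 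Next, equip this complex with the increasing filtration whose $s$-th piece is $F_s := \bigoplus_{p \leq s,\, q} \Ccal_{p+q-pN_L}^\varphi \otimes \F t^{-p}$: since $d_i^\varphi \otimes t^i$ sends $x \otimes t^{-p}$ to a class with $t$-exponent $-p+i$, it lowers the filtration index $p$ by $i$, so each summand of $\tilde{d}$ preserves $F_\bullet$.

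The associated spectral sequence then has the claimed form $\{E_{p,q}^{r,\varphi}, d^{r,\varphi}\}$ with $d^r$ of bidegree $(-r, r-1)$. By construction only $d_M \otimes 1$ preserves $p$, so $d^0 = d_M \otimes 1$ and $E^0_{p,q}$ is the asserted Morse piece; vertical homology of $E^0$ then yields $E^1_{p,q} = H_{p+q-pN_L}(L;\F) \otimes P_{pN_L}$, and the first filtration-shifting summand $d_1^\varphi \otimes t$ induces $d^1 = d_{1*}^\varphi \otimes t$ on homology. Collapse after at most $\lfloor (n+1)/N_L \rfloor$ pages follows because $d^r$ is governed by $d_r^\varphi$, whose Morse-index shift of $rN_L - 1$ cannot be accommodated in $[0, n]$ once $rN_L > n+1$. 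Convergence to $QH^\varphi(L;\Lambda)$ is the standard conclusion for a bounded filtration on each total degree, the filtration being exhausted within $\lfloor (n+1)/N_L \rfloor + 1$ steps on every finitely generated degree strip.

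The one step not automatic from the filtration machinery is the Leibniz rule. For this I would appeal to the chain-level quantum intersection product on the pearl complex, constructed in Biran--Cornea \cite{Bi-Co:rigidity} by counting pearly trees with one trivalent internal vertex and three external boundary edges with incidence conditions. This product satisfies the full Leibniz identity with respect to $\tilde{d}$ up to the Koszul sign $(-1)^{n-|x|}$, and its $p$-preserving component recovers the classical Morse intersection product. Extracting the terms of equal filtration degree at level $E^1$ then yields the stated identity $d_{1*}^\varphi(x \cdot y) = d_{1*}^\varphi(x)\cdot y + (-1)^{n-|x|} x\cdot d_{1*}^\varphi(y)$; this is essentially Buhovski's theorem \cite{Bu:toriaudin} transcribed to the twisted setting, and since $\varphi$ enters only as a multiplicative weight $\varphi(\partial B)$ on each disc component of a pearly tree, the extension from trivial to arbitrary local systems is formal. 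The main technical obstacle is the transversality, compactness, and orientation analysis for the mixed Morse--holomorphic moduli spaces with three boundary marked points needed to define this product, but this is handled in the references cited and requires no new ingredient in our setting.
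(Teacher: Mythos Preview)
Your sketch is correct and follows the standard route; the paper itself does not prove this theorem but merely states it as background, attributing the construction to Oh \cite{Oh:spectral} and Biran--Cornea \cite{Bi-Co:rigidity} and the Leibniz property to Buhovski \cite{Bu:toriaudin}. What you outline---filtering $\Ccal^\varphi(\mathcal{D})\otimes\Lambda$ by powers of $t$, identifying $E^0,E^1,d^1$, and deducing collapse from the bound $rN_L-1\leq n$ on the Morse-index shift---is precisely the argument in those references, so there is no discrepancy to discuss.

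One small imprecision worth tightening: the higher differential $d^{r,\varphi}$ is not literally the operator $d_r^\varphi\otimes t^r$ (for $r\geq 2$ it involves zig-zags through lower $d_i$'s), so saying ``$d^r$ is governed by $d_r^\varphi$'' is slightly misleading. The collapse argument you give is nonetheless valid because it is purely about degrees: $E^r_{p,q}$ is a subquotient of $H_{p+q-pN_L}(L;\F)\otimes P_{pN_L}$, and $d^{r,\varphi}$ lands in a subquotient of $H_{p+q-pN_L+rN_L-1}(L;\F)\otimes P_{(p-r)N_L}$, which forces $d^{r,\varphi}=0$ once $rN_L-1>n$ regardless of the explicit form of the differential.
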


\subsubsection{The minimal pearl complex}\label{sec:minimal}
We recall here some properties of the minimal pearl complex, following Biran-Cornea \cite[\S 4.1]{Bi-Co:rigidity}.  Given a generic pearl data triple $\mathcal{D} = (f, \rho, J)$ and its associated twisted pearl complex $(\mathcal{C}^\varphi, d^\varphi)$, there exists a minimal  pearl complex, denoted by $(\mathcal{C}^\varphi_{min}, \delta^\varphi)$, with the following properties.
\begin{itemize}
 \item $\mathcal{C}^\varphi_{min} = (H_*^{f_m}(L;\F)\otimes \Lambda, \delta^\varphi = \sum \delta_i^\varphi), \quad \delta_0=0, \; \delta_i^\varphi\colon H_k^{f_m}(L;\F) \to t^i \cdot H_{k+iN_L-1}^{f_m}(L;\F)$.  Here $f_m$ is a fixed Morse-Smale function and $H_*^{f_m}(L)$ denotes the Morse homology of $L$ with respect to $f_m$.
 \item There exists a quasi-isomorphism $\phi_f\colon \mathcal{C}^\varphi(\mathcal{D}) \to \mathcal{C}^\varphi_{min}$ with quasi inverse denoted by $\psi_f$.  Moreover, $\phi_f$ is compatible with the degree filtrations on both complexes and therefore induces a spectral sequence morphism that is an isomorphism from the first page onwards.  On the first page, it equals the Morse comparison isomorphisms $[\Phi_f^{f_m}]\colon H_*^f(L;\F) \to H_*^{f_m}(L;\F)$.  In other words, there is an associated minimal degree-spectral sequence $(\mathcal{E}^{*, \varphi}_{*,*}, \delta^{*, \varphi})$ and a commutative diagram
 \begin{equation}\label{eq:diagmin}
 \xymatrix{E^{k, \varphi}_{p,q} \ar[r]^{d^{k, \varphi}} \ar[d]^{[\Phi_f^{f_m}]} & E^{k, \varphi}_{p-k,q+k-1} \ar[d]^{[\Phi_f^{f_m}]}\\
 \mathcal{E}^{k, \varphi}_{p,q} \ar[r]^{\delta^{k, \varphi}} &  \mathcal{E}^{k, \varphi}_{p-k,q+k-1}}
 \end{equation}
 \item The quantum product on $(\mathcal{C}^\varphi, d^\varphi)$ induces a quantum product on $\mathcal{C}^\varphi_{min}$.  This is defined as (see \cite[Remark 4.1.3]{Bi-Co:rigidity})
 $$\xymatrix{
 \mathcal{C}^\varphi_{min} \otimes \mathcal{C}^\varphi_{min} \ar[r]^{\psi_{f_1} \otimes \psi_{f_2}} &\mathcal{C}^\varphi(f_1) \otimes \mathcal{C}^\varphi(f_2) \ar[r]^(0.65){\circ} & \mathcal{C}^\varphi(f_3) \ar[r]^{\phi_{f_3}} & \mathcal{C}^\varphi_{min}}$$
 and will also be denoted by $\circ$.  At the chain level, this product depends on the choice of  pearl data associated to $f_m, f_1, f_2$ and $f_3$, which we will not write in order to lighten the notation.  Moreover, it satisfies the Leibniz rule with respect to $\delta^\varphi$ and it is a deformation of the Morse intersection product, meaning that for all $x \in H_p^{f_m}(L), y \in H_q^{f_m}(L)$, we have $x \circ y = x \cdot y + h.o.t.$, where $x \cdot y \in H^{f_m}_{p+q-\dim L}(L)$ is the usual intersection product and h.o.t.\ lives in $\oplus_{j\geq 1} H^{f_m}_{p+q-\dim L + jN_L} \otimes \Lambda$.
 \end{itemize}
 \begin{remnonum}
 It is essential to use field coefficients in order to apply the minimal complex construction, since it makes use of convergence of the degree spectral sequence to the quantum homology.
\end{remnonum}
% master file: 3-fold.tex
\section{The cohomology ring of $\varphi$-narrow Lagrangian 3-folds}\label{sec:ring}
Let $\F$ be a field with $\text{char } \F \neq 2$, $L$ a monotone, closed, orientable Lagrangian 3-manifold in $(M, \omega)$, and $I_\F\colon H_2(L; \F) \otimes H_2(L;\F) \otimes H_2(L;\F) \to H_0(L; \F) \cong \F$ the 3-form defined by the triple intersection product.  We will often write $H_i$ instead of $H_i(L; \F)$, to lighten the notation.  Fix a Poincar\'e dual basis $e_i \in H_2, \; i=1, \dots, \dim H_2 =:b_2(L; \F)$ and $\bar{e}_i \in H_1$ , so that $e_i \cdot \bar{e}_j = \delta_{i,j} p$, where $p$ is the generator of $H_0$.  One easily checks that 
\begin{equation}\label{eq:formulaI}
e_i \cdot e_j = \sum_k I_\F(e_i, e_j, e_k) \bar{e}_k.
\end{equation}

\begin{thm}\label{thm:lag3form}
Suppose that there exists a narrow representation $\varphi\colon \pi_2(M, L) \to \F^\times$, where $\text{char } \F \neq 2$.  Then, either $I_\F$ is identically zero or there exists $i$ such that $I_\F(e_i, \cdot, \cdot)$ is a symplectic form on $H_2 / \langle e_i \rangle$.
\end{thm}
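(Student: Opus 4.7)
The plan is to use the Leibniz rule for $d_{1*}^\varphi$ (the first differential on Oh's spectral sequence, which I will abbreviate $d_1$) against the Morse intersection product, together with narrowness ($E^\infty = 0$) as a source of rank constraints. First, I record the structure of $d_1$ in the Poincar\'e dual basis: write $d_1(\bar{e}_k) = \sum_l a_{kl} e_l$, $d_1(e_j) = c_j [L]$, and $d_1(p) = \sum_i b_i \bar{e}_i$. The identity $\bar{e}_i \cdot \bar{e}_j \in H_{-1} = 0$ combined with Leibniz (and $\bar{e}_i \cdot e_j = \delta_{ij} p$, which follows from graded commutativity of the intersection product) forces the matrix $A = (a_{kl})$ to be antisymmetric. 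Similarly, $d_1(p \cdot e_j) = 0$ together with Leibniz gives $b_i = c_i$, so that $d_1$ is determined by the pair $(A, c)$.

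Next, I compute $d_1(e_i \cdot e_j)$ in two ways: via (\ref{eq:formulaI}) on one hand and via Leibniz on the other, with the sign $(-1)^{3-2} = -1$. Equating coefficients of $e_l$ yields the master equation
\begin{equation*}
\sum_k I_\F(e_i, e_j, e_k)\, a_{kl} \;=\; c_i \delta_{jl} - c_j \delta_{il} \qquad \forall\, i,j,l.
\end{equation*}
Writing $M_i$ for the antisymmetric $b_2 \times b_2$ matrix $(I_\F(e_i, e_j, e_k))_{j,k}$, which always has $e_i$ in its kernel, the equation reads $M_i A = c_i \mathrm{Id} - c\, e_i^T$.

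I then split into two cases. If $c = 0$, then $d_1$ vanishes on $H_0, H_2, H_3$ and equals $A$ on $H_1$; since the only higher differential available on a 3-fold with $N_L = 2$ is $d_2 : E^2_0 \to E^2_3$ (shifting Morse degree by $N_L + 1 = 3$), narrowness forces $E^2_1 = \ker A = 0$ and $E^2_2 = \mathrm{coker}\, A = 0$, i.e.\ $A$ is invertible, unless $H_1 = H_2 = 0$ in which case $I_\F$ vanishes vacuously. The master equation then reduces to $M_i A = 0$, so every $M_i = 0$ and $I_\F \equiv 0$. If instead some $c_i \neq 0$, the matrix $c_i \mathrm{Id} - c\, e_i^T$ has rank exactly $b_2 - 1$ (kernel spanned by $c$), so both $M_i$ and $A$ have rank at least $b_2 - 1$. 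Antisymmetry of $M_i$ together with $e_i \in \ker M_i$ bounds $\mathrm{rank}\, M_i$ above by $b_2 - 1$, and evenness of the rank of an antisymmetric matrix forces $b_2$ odd, $\mathrm{rank}\, M_i = b_2 - 1$, and $\ker M_i = \langle e_i \rangle$. This is precisely the assertion that $I_\F(e_i, \cdot, \cdot)$ descends to a symplectic form on $H_2/\langle e_i \rangle$.

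The main obstacle is deriving the master equation with the correct signs, which requires carefully threading graded commutativity of the intersection product through the Poincar\'e pairing in middle degrees. Once that equation is in hand, the dichotomy falls out of standard rank bounds on antisymmetric matrices, combined with the observation that on a 3-fold with $N_L = 2$ there is simply not enough room in the spectral sequence for higher differentials to cancel homology unless $d_1$ is already acyclic on $H_1$.
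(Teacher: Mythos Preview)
Your proof is correct and follows a genuinely different execution from the paper's, though both rest on the Leibniz rule for $d_{1*}^\varphi$ and the narrowness hypothesis.

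The paper splits according to the collapsing page $v\in\{2,3\}$ and treats each case with a tailored identity: for $v=3$ it applies Leibniz to $\bar e_i\cdot e_j=\delta_{ij}p$ together with $d_{1*}^\varphi(p)=0$ (proved via the divisor axiom in Lemma~\ref{lem:d1dual}), while for $v=2$ it builds an explicit section $v\mapsto\tfrac{e_1}{r_1}\cdot v$ of the exact sequence and checks injectivity on a basis of $\ker(d_1|_{H_2})$. Your route instead extracts a single linear-algebraic identity $M_iA=c_i\,\mathrm{Id}-c\,e_i^T$ from Leibniz applied to $e_i\cdot e_j$, and both cases drop out of elementary rank considerations for that equation. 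Two side benefits: your derivation of $b_i=c_i$ from $p\cdot e_j=0$ reproves the $n=3$ case of Lemma~\ref{lem:d1dual} purely algebraically, without the divisor axiom; and the antisymmetry of $A$ (which you note but do not use) actually yields for free that $b_2$ is even whenever $c=0$, a fact the paper only obtains later in Proposition~\ref{prop:detQ} via the quantum product $Q_\varphi$. On the other hand, the paper's explicit bases in the $v=2$ case are not wasted work: they are reused verbatim in \S\ref{sec:torinv} to compute quantum Reidemeister torsion.

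One small slip: the Morse-degree shift of $d^r$ is $rN_L-1$, not $N_L+1$; for $r=N_L=2$ both formulas give $3$, so your conclusion is unaffected.
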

\begin{rem}\label{rem:Sullivan}
As observed by Sullivan \cite{Sull:3foldring}, any 3-form on a free abelian group of rank $b$ can be realized as the intersection form of a closed orientable 3-manifold with $b_2(L) = b$.  Moreover, the space of such forms depends on $\frac{1}{6}b(b -1)(b-2)$ parameters, modulo  the action of the group $GL(b)$ of dimension $b^2$.  For related results over $\Z / n\Z$, see the work of Turaev \cite{Tur:cohomring, Tur:cohomringenglish}.
\end{rem}

\begin{proof}
We assume that $\dim H_2 \geq 2$, otherwise there is nothing to prove.  Since $L$ is $\varphi$-narrow, its minimal Maslov number must be $N_L=2$, for degree reasons.  Fix a generic pearl data triple $(f, \rho, J)$ and consider the associated degree spectral sequence $E_{*, *}^{r, \varphi}$ twisted by $\varphi$.  Denote by $v$ the collapsing page, i.e.\ $v = \min \{ r \; | \; E_{*, *}^{r, \varphi}\equiv 0, \; E_{*, *}^{r-1, \varphi} \neq 0\}$.  We have either $v=2$ or $v=3$, as $L$ is $\varphi$-narrow and $\dim L =3$.

\noindent \textbf{Case 1}: $v=3$.  Since the homology of the first page does not vanish, we must have $0=d_{1*}^\varphi\colon H_2 \to H_3$.  For degree reasons, $\dfrac{\ker d_{1*}^\varphi\colon H_2 \to H_3}{\text{im } d_{1*}^\varphi\colon H_1 \to H_2}$ survives to $E^{\infty, \varphi}$, and the latter is null, since $L$ is $\varphi$-narrow, hence the map $A_\varphi:=d_{1*}^\varphi\colon H_1 \to H_2$ is an isomorphism. Finally, by Lemma \ref{lem:d1dual}, $d_{1*}^\varphi(p)=0$.  Since $d_{1*}^\varphi$ is a derivation (see related results of Buhovski \cite{Bu:toriaudin}), we have, for all $i, j$,
$$0=d_{1*}^\varphi(\delta_{i,j} p) = d_{1*}^\varphi(\bar{e}_i \cdot e_j) = d_{1*}^\varphi(\bar{e}_i) \cdot e_j + \bar{e}_i \cdot d_{1*}^\varphi(e_j) = A_\varphi(\bar{e}_i) \cdot e_j$$
The map $A_\varphi$ is an isomorphism, therefore the product vanishes identically on $H_2$ and $I_\F \equiv 0$.  Note that it was not necessary to assume that $\text{char } \F \neq 2$ in this case.

\noindent \textbf{Case 2}:  $v=2$.  Since the homology of the first page is zero, we have the following exact sequence, where the maps are given by $d_{1*}^\varphi$:
$$
\xymatrix{0 \ar[r] & H_0 \ar[r] & H_1 \ar[r] & H_2 \ar[r] & H_3 \ar[r] & 0}
$$
Set $d_{1*}^\varphi(e_i) = r_i L$.  By exactness, we have that $d_{1*}^\varphi\colon H_2 \to H_3$ is surjective, and we can assume without loss of generality that $r_1 \neq 0$.  Then a basis for the kernel of $d_{1*}^\varphi$ is given by $B:= \{  e_i - \frac{r_i}{r_1}e_1 \; | \; i \geq 2 \}$, which is not empty since we assume $\dim H_2 \geq 2$.  Again by exactness, this gives a basis for the image of $d_{1*}^\varphi\colon H_1 \to H_2$.  Moreover, left-multiplication (in the Morse homology ring) by $\frac{e_1}{r_1}$ is injective on this basis, since $d_{1*}^\varphi \circ (\frac{e_1}{r_1} \cdot)\colon B \to B$ is the identity on $B$, as a quick check shows (see also \cite[Lemma 4.0.6]{Cha:torsion}).  Therefore, we have linearly independent vectors given by
\begin{align*}
\frac{e_1}{r_1} \cdot (e_i - \frac{r_i}{r_1} e_1) & = \frac{e_1 \cdot e_i}{r_1} \quad (\text{since }e_1^2 = 0, \text{ as char } \F \neq 2)\\
& = \frac{1}{r_1}\sum_j I_\F(e_1, e_i, e_j) \bar{e}_j \quad \text{by equation } \eqref{eq:formulaI}
\end{align*}
and $I_\F(e_1, \cdot, \cdot)$ is a non-degenerate two-form on $H_2 / \langle e_1 \rangle$.
\end{proof}
A couple of direct corollaries are worth mentionning.
\begin{cor}
Suppose that the second possibility in the theorem occurs.  Then the dimension of $H_2(L; \F)$ is odd.   Moreover, $L$ is $\F$-prime, i.e.\ if $L=A \# B$, then either $A$ or $B$ is an $\F$-homology sphere.
\end{cor}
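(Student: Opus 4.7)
The plan is to handle the two assertions separately; both follow by direct linear algebra once standard topological facts about the triple intersection form are invoked.

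First I would observe that $I_\F$ is, via Poincar\'e duality, the triple cup product on $H^1(L;\F)$; since $\text{char}\,\F \neq 2$ the anti-commutativity of cup products on $H^1$ gives $I_\F(x,x,y)=0$, so $I_\F(e_i,\cdot,\cdot)$ descends to a well-defined alternating form on $H_2(L;\F)/\langle e_i\rangle$. The hypothesis that this form is non-degenerate then forces $\dim H_2(L;\F) - 1$ to be even, hence $b_2(L;\F)$ is odd.

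For $\F$-primality, suppose $L = A \# B$ with $A, B$ closed orientable $3$-manifolds, and for contradiction assume that neither is an $\F$-homology sphere. Mayer--Vietoris applied to the decomposition $L = (A\setminus B^3) \cup_{S^2} (B\setminus B^3)$ yields $H_2(L;\F) \cong H_2(A;\F) \oplus H_2(B;\F)$. Write $e_i = a + b$ according to this splitting. The key claim is that the triple intersection form splits accordingly,
\[
I_\F(u,v,w) = I_\F^A(u_A,v_A,w_A) + I_\F^B(u_B,v_B,w_B),
\]
where subscripts denote components in the direct sum. Geometrically this is transparent: a $2$-cycle representing a class in $H_2(A;\F)$ may be chosen inside $A \setminus B^3 \subset L$, and cycles living in different summands can be made disjoint, so the only contributing triple intersections are those entirely in $A$ or entirely in $B$.

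Granting the splitting, the rest is pure linear algebra. The kernel of $I_\F(e_i,\cdot,\cdot)$ on $H_2(L;\F)$ equals $\ker I_\F^A(a,\cdot,\cdot) \oplus \ker I_\F^B(b,\cdot,\cdot)$, and by antisymmetry contains both $a$ and $b$. Non-degeneracy on $H_2(L;\F)/\langle e_i\rangle$ forces this kernel to be the one-dimensional line $\langle e_i\rangle$; if both $a$ and $b$ were nonzero, they would give two linearly independent kernel elements, a contradiction. So one of them, say $b$, vanishes, and then the whole of $H_2(B;\F)$ lies in the kernel, forcing $H_2(B;\F) = 0$ and by Poincar\'e duality also $H_1(B;\F) = 0$. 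This contradicts the assumption that $B$ is not an $\F$-homology sphere. The main technical point is thus the direct sum splitting of $I_\F$; it is standard for closed orientable $3$-manifolds, but it is the only step with genuine topological content beyond the linear algebra.
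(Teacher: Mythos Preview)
Your proof is correct. The paper states this corollary as direct and does not give a proof; your argument supplies precisely the standard details---antisymmetry of the alternating form forces $\dim(H_2/\langle e_i\rangle)$ to be even, and the splitting of the triple intersection form under connected sum reduces $\F$-primality to the observation that $\ker I_\F(e_i,\cdot,\cdot)$ is the line $\langle e_i\rangle$.
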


In Proposition \ref{prop:detQ}, we prove that whenever the first possibility in the theorem occurs, then $b_2(L;\F)$ is even.  Combined with the proof of Theorem \ref{thm:lag3form}, this yields:
\begin{cor}
Suppose that there exists one narrow representation.  Then the collapsing page of $E^{*, \varphi}_{*,*}$ does not depend on $\varphi \in \mathcal{N}(L;\F)$, it depends only on the form $I_\F$.
\end{cor}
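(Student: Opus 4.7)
The plan is to combine Theorem \ref{thm:lag3form} with its first corollary and with Proposition \ref{prop:detQ} to show that the collapsing page $v$ of $E^{*,\varphi}$ depends only on the parity of $b_2(L;\F)$, a number visible from $I_\F$ as the dimension of its domain. The crucial point is that, since $L$ is three-dimensional and $\varphi$-narrow, the spectral sequence $E^{*,\varphi}$ must collapse at $v=2$ or $v=3$; so I just need to pin down which case occurs in terms of data intrinsic to $L$.

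Given any $\varphi \in \mathcal{N}(L;\F)$, I would apply Theorem \ref{thm:lag3form} and separate the two possibilities of its statement (and of its proof). If $v=2$, case 2 of the proof applies, producing either a symplectic slice $I_\F(e_i,\cdot,\cdot)$ on $H_2/\langle e_i\rangle$ (when $b_2\geq 2$) or the trivial case $b_2=1$; by the first corollary following Theorem \ref{thm:lag3form}, $b_2(L;\F)$ is odd. If instead $v=3$, case 1 of the same proof yields $I_\F\equiv 0$, and Proposition \ref{prop:detQ} then forces $b_2(L;\F)$ to be even.

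Since $b_2(L;\F)$ is a topological invariant of $L$, independent of $\varphi$, these two cases are mutually exclusive and exhaustive once the parity of $b_2$ is fixed. Therefore the collapsing page is uniquely determined: $v=2$ when $b_2$ is odd and $v=3$ when $b_2$ is even. As $b_2(L;\F)=\dim H_2(L;\F)$ is precisely the dimension of the domain of $I_\F$, this parity (and hence $v$) is encoded in $I_\F$, which proves the corollary. The only nontrivial ingredient is Proposition \ref{prop:detQ}, whose proof relies on the chain-level quantum product $Q_\varphi$ introduced in \S \ref{sec:torsionandqproduct}; once that result is available, the argument above is immediate from Theorem \ref{thm:lag3form} and its first corollary.
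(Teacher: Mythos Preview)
Your proposal is correct and follows essentially the same approach as the paper: the sentence immediately preceding the corollary states that Proposition~\ref{prop:detQ} gives $b_2(L;\F)$ even when $v=3$, and that combining this with the proof of Theorem~\ref{thm:lag3form} (which gives $b_2$ odd when $v=2$) yields the result. Your write-up makes this reasoning explicit and handles the low-Betti-number edge cases carefully, but the logical structure is identical.
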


\begin{prop}\label{prop:ringgeneration}
If there exists a vector $e_i$ such that $I_\F(e_i, \cdot, \cdot)$ is symplectic on $H_2 / \langle e_i \rangle$ and $b_2 \geq 2$, then $H_*(L;\F)$ is generated as a ring by $H_2$.  Conversely, if $H_*(L;\F)$ is generated by $H_2$, then there exists a vector $e_i$ such that $I_\F(e_i, \cdot, \cdot)$ is symplectic on $H_2 / \langle e_i \rangle$.
\end{prop}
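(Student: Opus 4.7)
The proof hinges on identifying the image of the intersection product $H_2 \otimes H_2 \to H_1$ with the annihilator of the radical of $I_\F$. Define
\[
\mathrm{Rad}(I_\F) := \{ \eta \in H_2 : I_\F(\eta, \cdot, \cdot) \equiv 0 \}.
\]
Expanding via (\ref{eq:formulaI}) and using that the Poincar\'e pairing satisfies $\bar e_l \cdot e_m = \delta_{lm}\,p$, the pairing of $e_j \cdot e_k \in H_1$ with an arbitrary $\eta = \sum_l c_l e_l \in H_2$ equals $I_\F(e_j, e_k, \eta)$. So $\eta$ annihilates the whole image of $H_2 \cdot H_2$ precisely when $\eta \in \mathrm{Rad}(I_\F)$, and non-degeneracy of the Poincar\'e pairing $H_1 \otimes H_2 \to H_0$ yields the key equivalence
\[
H_2 \cdot H_2 = H_1 \quad \Longleftrightarrow \quad \mathrm{Rad}(I_\F) = 0.
\]

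Next, under the standing assumption $b_2 \geq 2$, I would reduce ring generation of $H_*(L;\F)$ by $H_2$ to this same condition. The fundamental class $[L] \in H_3$ is the unit of the intersection ring and belongs to every unital subring; the Poincar\'e pairing $H_2 \otimes H_1 \to H_0 \cong \F$ is non-degenerate, so once $H_1 = H_2 \cdot H_2$ is produced, one further intersection with $H_2$ fills in $H_0$. Thus both directions of the proposition can be phrased in terms of $\mathrm{Rad}(I_\F)$ being trivial.

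For the forward direction, the symplectic hypothesis says that $\ker \bigl(I_\F(e_i, \cdot, \cdot) : H_2 \to H_2^*\bigr) = \langle e_i\rangle$. Since $\mathrm{Rad}(I_\F) \subset \ker I_\F(e_i, \cdot, \cdot) = \langle e_i\rangle$, and $e_i$ cannot itself lie in the radical (otherwise $I_\F(e_i, \cdot, \cdot) \equiv 0$ contradicts non-degeneracy on the non-zero quotient $H_2/\langle e_i\rangle$, using $b_2 \geq 2$), we obtain $\mathrm{Rad}(I_\F) = 0$, hence $H_2 \cdot H_2 = H_1$ and ring generation follows.

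For the converse, I would invoke the dichotomy of Theorem \ref{thm:lag3form}: under a narrow representation, either $I_\F \equiv 0$ or some $e_i$ yields a symplectic form on $H_2/\langle e_i\rangle$. If $H_*(L;\F)$ is generated as a ring by $H_2$, then $H_2 \cdot H_2 = H_1 \neq 0$, so by the equivalence above $\mathrm{Rad}(I_\F) \neq H_2$, i.e., $I_\F \not\equiv 0$, and the second alternative of Theorem \ref{thm:lag3form} must hold. The only slightly delicate ingredient is the duality identification in the first paragraph, which uses the antisymmetry of $I_\F$ in its last two slots; everything else is a one-line consequence of the cited theorem together with a kernel computation.
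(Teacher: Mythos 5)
The paper's own proof of Proposition \ref{prop:ringgeneration} consists of the single word ``Linear algebra,'' so there is no written argument to compare against; I judge your proof on its own merits. Your forward direction is correct and is surely what the author had in mind: by \eqref{eq:formulaI} and nondegeneracy of the Poincar\'e pairing $H_1\otimes H_2\to H_0$, the subspace $H_2\cdot H_2\subset H_1$ is the annihilator of $\mathrm{Rad}(I_\F)$ (using that $I_\F$ is alternating, so the radical in the first slot equals the annihilator in the last slot), the symplectic hypothesis forces $\mathrm{Rad}(I_\F)\subset\langle e_i\rangle$ and then, since $b_2\geq 2$ makes the quotient nonzero, $\mathrm{Rad}(I_\F)=0$; the classes $[L]$ and $H_0$ are then obtained as you say.

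The converse is where you leave ``linear algebra,'' and the departure is substantive. You invoke Theorem \ref{thm:lag3form}, whose hypothesis --- the existence of a narrow representation --- is not among the stated hypotheses of Proposition \ref{prop:ringgeneration}. This is not cosmetic: as a statement of pure linear algebra the converse is false. Take $b_2=6$ and let $I_\F$ be the alternating $3$--form with $I_\F(e_1,e_2,e_3)=I_\F(e_4,e_5,e_6)=1$ and all other values on basis triples (up to antisymmetry) equal to zero; by Sullivan's theorem quoted in Remark \ref{rem:Sullivan} this is the intersection form of some closed orientable $3$--manifold. Its radical is trivial, so by your own first paragraph $H_2\cdot H_2=H_1$ and a further product reaches $H_0$, i.e.\ $H_*(L;\F)$ is generated by $H_2$; yet for every $v\in H_2$ the $2$--form $I_\F(v,\cdot,\cdot)$ lives on the $5$--dimensional space $H_2/\langle v\rangle$ and so is never symplectic. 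Hence your appeal to the narrow hypothesis is actually unavoidable, but you must say explicitly that you are assuming $\mathcal{N}(L,\F)\neq\emptyset$ (defensible, given the title of \S\ref{sec:ring} and the fact that the proposition is only ever applied in that setting). Granting that hypothesis, your converse is fine: generation forces $I_\F\not\equiv 0$, since $H_0$ can only be reached by a triple product of classes in $H_2$, and the dichotomy of Theorem \ref{thm:lag3form} then leaves only the symplectic alternative. As written against the proposition's literal wording, however, the converse has a gap.
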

\noindent The assumption on the Betti number is necessary, as seen by considering $S^1 \times S^2$.
\begin{proof}  Linear algebra.  
%Indeed, it is clear, by formula (\ref{eq:formulaI}), that $\text{span}\langle e_1 \cdot e_i \; | \; i \geq 2 \rangle = \text{span}\langle \bar{e}_2, \dots, \bar{e}_{b_2}\rangle$, by assumption on $I_\F(e_1, \cdot, \cdot)$.  For the same reason, we can assume, without loss of generality, that $I(e_1, e_2, e_3) = 1$.  Then, $e_2 \cdot e_3 = \bar{e}_1 + \sum_{k \geq 2} I(e_2, e_3, e_k)\bar{e}_k$ and $\bar{e}_1$ is in the image of $H_2 \cdot H_2$.
\end{proof}

\begin{lem}\label{lem:d1dual}
Given any monotone Lagrangian $L$ of dimension $n$, with minimal Maslov number $N_L=2$, the maps $d_{1*}^\varphi \colon H_0(L;\F) \to H_1(L;\F)$ and $d_{1*}^\varphi \colon H_{n-1}(L;\F) \to H_{n}(L;\F)$ are dual to each other.
\end{lem}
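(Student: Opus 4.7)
The plan is to deduce the duality directly from the Leibniz rule for $d_{1*}^\varphi$ with respect to the Morse intersection product, which is already packaged into the statement of the degree spectral sequence (the third bullet of Theorem \ref{thm:specseq}). No further geometric analysis of pearl moduli spaces is needed.

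First I would spell out what ``dual'' means. Over the field $\F$, the Morse intersection product induces non-degenerate Poincar\'e duality pairings
\begin{equation*}
\langle \cdot, \cdot \rangle \colon H_k(L;\F) \otimes H_{n-k}(L;\F) \to H_0(L;\F) \cong \F, \qquad \langle \alpha, \beta \rangle := \alpha \cdot \beta.
\end{equation*}
These give canonical identifications $H_0(L;\F)^* \cong H_n(L;\F)$ and $H_1(L;\F)^* \cong H_{n-1}(L;\F)$. To say that $d_{1*}^\varphi \colon H_0 \to H_1$ and $d_{1*}^\varphi \colon H_{n-1} \to H_n$ are dual therefore amounts to checking, for all $a \in H_0(L;\F)$ and $b \in H_{n-1}(L;\F)$, an identity of the shape $\langle d_{1*}^\varphi(a), b \rangle = \pm \langle a, d_{1*}^\varphi(b) \rangle$ in $\F$.

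Second, I would apply the Leibniz rule with $x = a$ of degree $0$ and $y = b$ of degree $n-1$:
\begin{equation*}
d_{1*}^\varphi(a \cdot b) \;=\; d_{1*}^\varphi(a) \cdot b \;+\; (-1)^{n} \, a \cdot d_{1*}^\varphi(b).
\end{equation*}
The decisive observation is that the intersection product lowers total degree by $n$, so $a \cdot b$ lies in $H_{-1}(L;\F) = 0$ and the left-hand side vanishes. Rearranging, one obtains the equality
\begin{equation*}
d_{1*}^\varphi(a) \cdot b \;=\; (-1)^{n+1}\, a \cdot d_{1*}^\varphi(b) \qquad \text{in } H_0(L;\F) \cong \F,
\end{equation*}
which is exactly $\langle d_{1*}^\varphi(a), b \rangle = (-1)^{n+1} \langle a, d_{1*}^\varphi(b) \rangle$. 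Since the pairings are non-degenerate, this identity completely determines one of the two maps from the other, proving they are dual (up to the overall sign $(-1)^{n+1}$, which is absorbed in the usual convention for transposes).

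There is no real obstacle: the content is already in the Leibniz rule, and the only thing to verify is the degree bookkeeping ensuring $a\cdot b \in H_{-1}=0$. The only mild subtlety worth remarking on, which I would mention in a final sentence, is that this argument works for any monotone Lagrangian with $N_L = 2$ of any dimension $n$, and is the reason the proof of Theorem \ref{thm:lag3form} can legitimately invoke $d_{1*}^\varphi([pt]) = 0$ from the vanishing of $d_{1*}^\varphi \colon H_{n-1} \to H_n$ established earlier in Case 1.
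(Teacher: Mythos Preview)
Your proof is correct and takes a genuinely different route from the paper. The paper argues geometrically: it fixes a Poincar\'e dual basis $\{e_i\}$ of $H_{n-1}$ and $\{\bar e_i\}$ of $H_1$, then computes both maps explicitly via the divisor axiom for open Gromov--Witten invariants, obtaining
\[
d_{1*}^\varphi(e_i)=\sum_A m_0(A)\varphi(A)\,(e_i\cdot[\partial A])\,[L],\qquad
d_{1*}^\varphi(p)=\sum_i\sum_A m_0(A)\varphi(A)\,([\partial A]\cdot e_i)\,\bar e_i,
\]
from which the duality is read off by comparing coefficients. Your argument is purely algebraic: you feed $a\in H_0$ and $b\in H_{n-1}$ into the Leibniz rule from Theorem~\ref{thm:specseq}, observe that $a\cdot b\in H_{-1}=0$, and immediately get $\langle d_{1*}^\varphi a,\,b\rangle=(-1)^{n+1}\langle a,\,d_{1*}^\varphi b\rangle$.

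The trade-off is clear. Your route is shorter and uses nothing beyond what is already packaged in the spectral sequence theorem; no moduli space or divisor axiom ever appears. The paper's route, by contrast, produces the explicit formulas for $d_{1*}^\varphi$ on $H_0$ and $H_{n-1}$ in terms of $m_0(A)$ and $[\partial A]$, which are exactly the ingredients reused later in \S\ref{sec:superpotential} to identify $\delta_1^\varphi(e_i)$ with $z_i\,\partial\SP/\partial z_i$. So your argument suffices for the lemma as stated and for its use in Case~1 of Theorem~\ref{thm:lag3form}, but it does not by itself supply the link to the superpotential that the paper's computation provides.
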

\begin{proof}
Let $\{e_i \}$ be a basis of $H_{n-1}$ and $\{ \bar{e}_i \}$ denote the corresponding Poincar\'e dual basis of $H_1$.  By the divisor axiom for open Gromov-Witten invariants, we have
$$d_{1*}(e_i) = \sum_{A \in \pi_2(M, L)} \# \mathcal{P}(e_i, L; A) \varphi(A) L = \sum_A m_0(A)\varphi(A) (e_i \cdot [\partial A]) L,$$
where $\mathcal{P}(e_i, L; A)$ is the space of pearls going from $e_i$ to $L$ in the class $A$, $m_0(L, A)$ denotes the oriented number of Maslov index two pseudoholomorphic discs going through a generic point, in the homotopy class $A$ (see also \S \ref{sec:superpotential}) and $[\partial A]$ denotes the homotopy class of the boundary of $A$.  Moreover, similar considerations yield
$$d_{1*}(p) = \sum_{i} \sum_{A \in \pi_2(M, L)} \# \mathcal{P}(p, \bar{e}_i; A) \varphi(A) \bar{e}_i = \sum_i \sum_A m_0(A) \varphi(A)([\partial A]\cdot e_i)\bar{e}_i$$
\end{proof}

% Master file: 3-fold.tex

\section{Invariance of torsion}\label{sec:torinv}
Throughout this whole section, we let $\F$ be a field such that $\text{char } \F$ does not divide $|\tor(L)|$ and is different than two.  This implies that the Betti numbers of $L$ over $\F$ and $\Z$ coincide.  We also fix a generic pearl data triple $\mathcal{D} = (f, \rho, J)$ on $L$ and let $\varphi\colon \pi_2(M, L) \to \F^\times$ be a narrow representation.
\subsection{The second page collapses}
This was essentially proven in \cite{Cha:torsion}.  Using the low dimension of $L$, we make the formulas appearing there more explicit.  First suppose that the narrow representation $\varphi$ is such that the degree spectral sequence collapses at the second page. We use the notations from \S \ref{sec:ring}.  By the proof of Theorem \ref{thm:lag3form}, the first page of $\{E^{*, \varphi}_{*,*}; \F \}$ yields an acyclic chain complex
$$
\xymatrix{0 \ar[r] & H_0(L;\F) \ar[r]^{d^\varphi_{1*}} & H_{1}(L;\F)  \ar[r]^{d^\varphi_{1*}} & H_{2}(L;\F) \ar[r]^{d^\varphi_{1*}} &  H_{3}(L;\F) \ar[r] & 0
}
$$
We denote the torsion of this chain complex, with respect to the basis $h_* \otimes 1$ (see \S \ref{sec:setting} and Lemma \ref{lem:zbasisf}) by $\tau_\varphi(H_*, d^\varphi_{1*}, h_*\otimes 1)$.  It does not depend on the generic choice of pearl data triple $\mathcal{D}$, see \cite{Cha:torsion}.  From \cite[Theorem 4.0.4]{Cha:torsion}, we have $\tau_\varphi(L, \mathcal{D}) = \tau_\varphi(L, h_*\otimes 1) = \dfrac{|\text{Tor}_{\text{ev}}(L)|}{|\text{Tor}_{\text{odd}}(L)|}\tau_\varphi(H_*, d_{1*}^\varphi, h_*\otimes 1)$.  In order to compute the torsion of the first page of the degree spectral sequence, we set $B_i = \text{Im} \{ d^\varphi_{1*}\colon H_{i-1} \to H_i \}$.  We now find explicit bases $b_i$ for these vector spaces.  By assumption, the map $d^\varphi_{1*}\colon H_2 \to H_3$ is surjective.  Set $b_3 = [L]$ and $d^\varphi_{1*}(e_i)= r_i [L]$.  Without loss of generality, we can suppose that $r_1 \neq 0$.  By exactness, $b_2 = \{e_i - \dfrac{r_i e_1}{r_1} \; | \; i \geq 2 \}$ is a basis of $B_2$.  By Lemma \ref{lem:d1dual}, $d^\varphi_{1*}(p) = \sum_i r_i \bar{e}_i$, so we set $b_1 = \sum_i r_i \bar{e}_i$.  Finally, $b_0 = 0$.

Now, one checks that the map $\sigma\colon b_{i+1}\to H_i, \; v \mapsto \frac{e_1}{r_1}\cdot v$, gives a section of the exact sequence $\xymatrix{0 \ar[r] & b_i \ar[r] & H_i \ar[r]^{d^\varphi_{1*}} & b_{i+1} \ar[r]& 0}$.  Note that, by formula (\ref{eq:formulaI}), we have $\sigma \cdot b_2 = \frac{e_1}{r_1} \cdot (e_i - \frac{r_i e_1}{r_1}) =  \frac{e_1}{r_1} \cdot e_i = \frac{1}{r_1} \sum_j I_\F(e_1, e_i, e_j)\bar{e}_j$.  With these choices, the change of basis matrices $[b_i \sigma(b_{i+1}) / h_i]$ are as follows, up to a permutation of the columns:
$$[b_0 \sigma(b_1) / h_0] = (1) = [b_3 / h_3]$$
$$
\begin{blockarray}{ccc}
 & b_1 &  \sigma(b_2)  \\
\begin{block}{c(c|c)}
  \bar{e}_1           & r_1           & 0  \\
\cline{2-3}
  \bar{e}_2           & r_2           &     \\
  \vdots              & \vdots        &  \frac{1}{r_1}I_\F(e_1, e_i, e_j)   \\
  \bar{e}_{b_2(L;\F)} & r_{b_2(L;\Z)} &     \\
\end{block}
\end{blockarray}= [b_1 \sigma(b_2) / h_1]
$$
$$
\begin{blockarray}{ccccc}
 & \sigma(b_3) & e_2 - \frac{r_2 e_1}{r_1} & \cdots & e_{b_2(L;\Z)} - \frac{r_{b_2(L;\Z)} e_1}{r_1} \\
\begin{block}{c(c|ccc)}
  e_1    & \frac{1}{r_1} & \frac{r_2}{r_1} & \cdots & \frac{r_{b_2(L;\Z)}}{r_1}  \\
\cline{2-5}
  e_2    &  0            &  &   &   \\
  \vdots &  \vdots       &  & I &   \\
  e_{b_2(L;\Z)}   &  0            &  &   &   \\
\end{block}
\end{blockarray}= [b_2 \sigma(b_3) / h_2]
$$
Therefore, $\tau_\varphi(H_*, d_{1*}^\varphi) = \dfrac{\det [b_0 \sigma(b_1) / h_0] \det [b_2 \sigma(b_3) / h_2]}{\det [b_1 \sigma(b_2) / h_1] \det [b_3 / h_3]} = \pm \dfrac{r_1^{b_2(L;\Z)-3}}{\det I_\F(e_1, \cdot, \cdot)|_{H_2/ \langle e_1 \rangle}}, \quad$ and
\begin{equation}\label{eq:torsionE1}
 \tau_\varphi(L, h_*\otimes 1) = \dfrac{|\text{Tor}_{\text{ev}}(L)|}{|\text{Tor}_{\text{odd}}(L)|}\cdot \dfrac{r_1^{b_2(L;\Z)-3}}{\det I_\F(e_1, \cdot, \cdot)|_{H_2/ \langle e_1 \rangle}} \in \F^\times / \pm 1.
\end{equation}
Note that $\det I_\F(e_1, \cdot, \cdot)|_{H_2/ \langle e_1 \rangle}$ is not necessarily equal to $\pm 1$; examples are easily given using Sullivan's result from Remark \ref{rem:Sullivan}.

\subsection{The third page collapses}\label{sec:torinv3rdpage}
Suppose that the narrow representation $\varphi$ is such that the associated degree spectral sequence $\{E^{*, \varphi}_{*,*}; \F \}$ collapses at the third page.  By the proof of Theorem \ref{thm:lag3form}, on the first page of $\{E^{*, \varphi}_{*,*}; \F \}$, we have a chain complex
$$
\xymatrix{0 \ar[r] & E^{1, \varphi}_{0,0} \ar@{=}[d] \ar[r]^0 & E^{1, \varphi}_{-1,0}  \ar@{=}[d] \ar[r]^\cong & E^{1, \varphi}_{-2,0} \ar@{=}[d] \ar[r]^0 & E^{1, \varphi}_{-3,0} \ar@{=}[d] \ar[r] & 0\\
0 \ar[r] & H_0^f(L;\F)\otimes P_0 \ar[r]^0 & H_{1}^f(L;\F)\otimes P_{-2}  \ar[r]^{A_{\mathcal{D}}^\varphi \otimes t} & H_{2}^f(L;\F)\otimes P_{-4} \ar[r]^0 &  H_{3}^f(L;\F)\otimes P_{-6} \ar[r] & 0
}
$$
where $A_{\mathcal{D}}^\varphi\colon H_1^f(L;\F) \to H_2^f(L;\F)$ is an isomorphism.  On the second page, we have
$$\xymatrix{
0 \ar[r] & E^{2, \varphi}_{0,0} \ar@{=}[d] \ar[r]^\cong & E^{2, \varphi}_{-2,1}  \ar@{=}[d] \ar[r] & 0\\
0 \ar[r] & H_0^f(L;\F)\otimes P_0 \ar[r]^{r_{\mathcal{D}}^\varphi\otimes t^2} & H_{3}^f(L;\F)\otimes P_{-4}  \ar[r] & 0
}$$
where $r_{\mathcal{D}}^\varphi\colon \F \cong H_0^f(L;\F) \to H_3^f(L;\F) \cong \F$ is an isomorphism.

By assumption on $\text{char } \F$, and by Lemma \ref{lem:zbasisf}, a basis of $H_k^f(L;\F)$ is given by tensoring a basis of the free part of $H_k^f(L;\Z)$ with $\F$, which we denote by $h_k^f \otimes 1$.  Recall the construction of the minimal pearl complex associated to a fixed pearl triple $\mathcal{D}^\varphi_m = (f_m, \rho_m, J_m)$ from \S \ref{sec:minimal}.
\begin{lem}\label{lem:specseqdiff}
With respect to the bases $h_k^f \otimes 1$ and $h_k^{f_m} \otimes 1$, we have $\det A_{\mathcal{D}}^\varphi = \pm \det A_{\mathcal{D}_m}^\varphi$ and $r_{\mathcal{D}}^\varphi = \pm r_{\mathcal{D}_m}^\varphi$.
\end{lem}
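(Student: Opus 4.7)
The plan is to exploit the commutative square \eqref{eq:diagmin} on pages one and two of the spectral sequences to realize the minimal-complex maps as conjugates of their pearl counterparts by the Morse comparison isomorphism. Writing $\Phi_k := [\Phi_f^{f_m}]\colon H_k^f(L;\F) \to H_k^{f_m}(L;\F)$, the diagrams immediately yield
\[
A_{\mathcal{D}_m}^\varphi = \Phi_2 \circ A_{\mathcal{D}}^\varphi \circ \Phi_1^{-1}, \qquad r_{\mathcal{D}_m}^\varphi = \Phi_3 \circ r_{\mathcal{D}}^\varphi \circ \Phi_0^{-1}.
\]
Taking determinants in both identities reduces the entire lemma to the single claim that $\det \Phi_k = \pm 1$ with respect to the chosen bases $h_k^f \otimes 1$ and $h_k^{f_m}\otimes 1$ of the source and target.

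Next, I would establish this determinant claim by lifting to $\Z$. The Morse comparison isomorphism is classically constructed via signed continuation-line counts and thus arises from a $\Z$-linear isomorphism $\Phi_k^\Z\colon H_k^f(L;\Z) \to H_k^{f_m}(L;\Z)$. Any such isomorphism preserves the torsion subgroup and descends to an isomorphism of the free quotients, which are free $\Z$-modules of the same rank; its matrix in any pair of integral bases of those quotients is $\Z$-invertible, hence has determinant $\pm 1$. By Lemma \ref{lem:zbasisf} together with the standing assumption that $\text{char}\,\F$ does not divide $|\tor(L)|$, the $\F$-bases $h_k^f \otimes 1$ and $h_k^{f_m} \otimes 1$ arise precisely as the reductions modulo $\text{char}\,\F$ of such integral bases. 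The matrix of $\Phi_k$ over $\F$ is therefore the reduction of the integer matrix of the restriction of $\Phi_k^\Z$ to the free parts, and its determinant remains $\pm 1$ in $\F^\times$.

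Substituting back into the two conjugation identities gives $\det A_{\mathcal{D}}^\varphi = \pm \det A_{\mathcal{D}_m}^\varphi$ and $r_{\mathcal{D}}^\varphi = \pm r_{\mathcal{D}_m}^\varphi$, the latter being a scalar identity. The main point requiring care is that the minimal pearl complex recalled in \S \ref{sec:minimal} is itself only defined over a field, so one must verify that the first-page morphism induced by $\phi_f$ genuinely admits an integral lift before extracting the sign. This is what rigidifies the sign ambiguity, but it follows essentially immediately from the classical $\Z$-level construction of the Morse continuation maps underlying $[\Phi_f^{f_m}]$, so no serious obstacle is anticipated.
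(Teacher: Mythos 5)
Your proposal is correct and follows essentially the same route as the paper: both use the commutative diagram \eqref{eq:diagmin} to realize $A_{\mathcal{D}_m}^\varphi$ and $r_{\mathcal{D}_m}^\varphi$ as conjugates of $A_{\mathcal{D}}^\varphi$ and $r_{\mathcal{D}}^\varphi$ by the Morse comparison isomorphism, and then observe that this isomorphism is the reduction of an integral isomorphism $[\Phi_f^{f_m}]_\Z$, whence its determinant is $\pm 1$ in the bases $h_k^f\otimes 1$, $h_k^{f_m}\otimes 1$. Your extra remark about verifying the integral lift of the first-page morphism is a slightly more explicit version of the paper's assertion that $[\Phi_f^{f_m}]_\F = [\Phi_f^{f_m}]_\Z\otimes 1$.
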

\begin{proof}
The quasi-isomorphism $\phi_f\colon \mathcal{C}^\varphi(\mathcal{D}) \to \mathcal{C}^\varphi_{\text{min}}$  from \S \ref{sec:minimal} induces a spectral sequence morphism which is an isomorphism from the first page onwards, see diagram (\ref{eq:diagmin}).  Moreover, it coincides on the first page with the comparison isomorphism in Morse homology, $[\Phi_f^{f_m}]\colon H_*^f(L;\F) \to H_*^{f_m}(L;\F)$.  By assumption on the field, we have $H_*(L;\F) = H_*(L;\Z)\otimes \F$ and the Morse isomorphism is compatible with this identification, i.e.\ $[\Phi_f^{f_m}]_\F = [\Phi_f^{f_m}]_\Z\otimes 1$, where $[\Phi_f^{f_m}]_\Z\colon H_*^f(L;\Z) \to H_*^{f_m}(L;\Z)$.  Therefore, $A_{\mathcal{D}}^\varphi = ([\Phi_f^{f_m}]_\Z\otimes 1)^{-1} A_{\mathcal{D}_m}^\varphi ([\Phi_f^{f_m}]_\Z\otimes 1)$.  The same holds for $r_{\mathcal{D}}^\varphi$ and $r_{\mathcal{D}_m}^\varphi$.  But $ \det ([\Phi_f^{f_m}]_\Z\otimes 1) = (\det [\Phi_f^{f_m}]_\Z)\otimes 1 = \pm 1$.
\end{proof}

In order to lighten the notation, we define $A_\varphi:=A^\varphi_{\mathcal{D}_m}$ and $r_\varphi:= r^\varphi_{\mathcal{D}_m}$.
\begin{thm}\label{thm:torsionformula}
Assume that $\varphi\colon \pi_2(M, L) \to \F^\times$ is a narrow representation such that $E^{*, \varphi}$ collapses at the third page.  Then $\tau_\varphi(L, \mathcal{D}) = \tau_\varphi(L, h_*\otimes 1)= \dfrac{|\tor_{\text{ev}}(L)|}{|\tor_{\text{odd}}(L)|}\cdot \dfrac{\det A_\varphi}{r_\varphi} \in \F^\times/ \pm 1$.  In particular, the torsion does not depend on a choice of generic pearl triple and can be computed using the minimal pearl complex.
\end{thm}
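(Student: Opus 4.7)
The plan is to factor the computation through the minimal pearl complex of \S\ref{sec:minimal} and exploit the fact that, under the collapse hypothesis, its differential is almost completely determined by $A_\varphi$ and $r_\varphi$. Fix the minimal pearl triple $\mathcal{D}_m = (f_m, \rho_m, J_m)$ and consider the acyclic $2$-periodic complex $(\mathcal{C}^\varphi_{\min}, \delta^\varphi)$ built on $H_*(L;\F)\otimes \Lambda$ with basis $h_*\otimes 1$. Since $N_L = 2$ and $\dim L = 3$, only $\delta_1^\varphi$ and $\delta_2^\varphi$ can be nonzero. The third-page collapse, together with Lemma \ref{lem:d1dual}, forces $\delta_1^\varphi$ to vanish on $H_0$ and on $H_2$ and to coincide with $A_\varphi$ on $H_1$, while $\delta_2^\varphi$ restricted to $H_0$ is $r_\varphi$. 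Consequently $\mathcal{C}^\varphi_{\min}$ splits into the direct sum of two acyclic two-term complexes $H_1 \xrightarrow{A_\varphi} H_2$ and $H_0 \xrightarrow{r_\varphi} H_3$.

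First I would compute the torsion of the minimal complex directly. Using the $2$-periodic formula of \S\ref{sec:periodictorsion}, with bases of $\mathrm{Im}(A_\varphi)$ and $\mathrm{Im}(r_\varphi)$ lifted back via $A_\varphi^{-1}$ and $r_\varphi^{-1}$, the two change-of-basis matrices become block-diagonal, and the $2$-periodic torsion evaluates to $\det A_\varphi / r_\varphi \in \F^\times/\pm 1$. This computation is manifestly independent of the original data $\mathcal{D}$.

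Next I would relate $\tau_\varphi(L, \mathcal{D})$ to this minimal torsion through the quasi-isomorphisms $\phi_f$ and $\psi_f$ of \S\ref{sec:minimal}. The essential point is multiplicativity of torsion along the degree filtration: the pearl complex has $(\mathcal{C}, d_M)$ as its $E^{0,\varphi}$ page, so passing from the critical-point basis to the basis $h_*\otimes 1$ introduces exactly the Morse torsion, which by Lemma \ref{lem:zbasisf} and equation \eqref{eq:torsioneqtorsion} equals $|\tor_{\text{ev}}(L)|/|\tor_{\text{odd}}(L)|$. Combined with Lemma \ref{lem:specseqdiff}, which identifies $\det A^\varphi_{\mathcal{D}}$ and $r^\varphi_{\mathcal{D}}$ with $\det A_\varphi$ and $r_\varphi$ up to sign, this yields the stated formula and shows that it is computed from the minimal complex.

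The hardest part will be this multiplicativity argument: the approach of \cite[Theorem 4.0.4]{Cha:torsion} handled the second-page-collapse case, where only one quantum factor beyond the Morse contribution appears. Here one must track two additional factors ($\det A_\varphi$ from $E^1$ and $r_\varphi^{-1}$ from $E^2$) while carefully managing the sign ambiguity inherent in $\F^\times/\pm 1$; in particular some bifurcation analysis, along the lines alluded to in \S\ref{sec:torinv3rdpage}, may be needed to check that the torsion does not jump across walls in the space of generic data where the form of $\delta^\varphi$ stays the same but the auxiliary splittings change. Once this extended multiplicativity is in place, invariance of $\tau_\varphi(L, \mathcal{D})$ under generic choice of $\mathcal{D}$ is immediate from the fact that both $\det A_\varphi$ and $r_\varphi$ are intrinsic to the minimal complex.
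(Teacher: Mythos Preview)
Your computation of the $2$-periodic torsion of $(\mathcal{C}^\varphi_{\min},\delta^\varphi)$ is correct, and the structural claim that it decomposes as $H_1\xrightarrow{A_\varphi}H_2$ plus $H_0\xrightarrow{r_\varphi}H_3$ is exactly right.  But the paper does \emph{not} compute via the minimal complex.  After the base change from $\text{Crit}_{[*]}f$ to $h_{[*]}b^M_{[*]}b^M_{[*-1]}$ (which contributes the factor $|\tor_{\text{ev}}|/|\tor_{\text{odd}}|$ by \eqref{eq:torsionchangebasis}--\eqref{eq:torsioneqtorsion}), the paper writes the full pearl differential $d^\varphi$ on $\mathcal{C}^\varphi(\mathcal{D})$ as an explicit block matrix, simplifies it using $d^\varphi\circ d^\varphi=0$, reads off bases of the $2$-periodic boundaries, and computes the torsion by hand as $\det A^\varphi_{\mathcal D}/(\alpha-M_6M_1)$.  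The remaining work is to identify $\alpha-M_6M_1$ with $r^\varphi_{\mathcal D}$, which is done by unwinding the filtration definition of the $E^{2,\varphi}$ differential directly on $\mathcal{C}^\varphi(\mathcal{D})$; Lemma~\ref{lem:specseqdiff} then gives invariance.  No bifurcation analysis is used (the bifurcation discussion in \S\ref{sec:torinv3rdpage} is only heuristic commentary after the proof).

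The step you flag as hardest is indeed the gap.  The quasi-isomorphisms $\phi_f,\psi_f$ are not based isomorphisms---the two complexes have different rank over $\F$---so the passage from $\tau_\varphi(L,\mathcal{D})$ to $\tau(\mathcal{C}^\varphi_{\min})$ is not a change-of-basis formula, and ``multiplicativity along the degree filtration'' must be proved, not invoked.  One would need either a based splitting $\mathcal{C}^\varphi(\mathcal D)\cong\mathcal{C}^\varphi_{\min}\oplus\mathcal{C}''$ with $\tau(\mathcal{C}'')=1$ in the $h_*b^M_*b^M_*$ basis, or a direct verification that $\phi_f$ is a simple equivalence.  The paper's own remark immediately after the proof records the identity $\tau_\varphi(L)=\tau(E^{0,\varphi})\tau(E^{1,\varphi})\tau(E^{2,\varphi})$---which is precisely your multiplicativity principle---but explicitly notes that extending it beyond this specific case is ``a bit delicate'' because of the basis choices on higher pages.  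So your route is viable and more conceptual, but as written it relies on a statement the paper deliberately proves around rather than through.
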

\begin{proof}
First, it is convenient to change the basis of the pearl complex $\mathcal{C}_{[*]}(\mathcal{D})$ from $\text{Crit}_{[*]} f$ to the basis $h_{[*]}b_{[*]}^Mb_{[*-1]}^M$, see \S \ref{sec:milnorsdef}.  We then have, by formulae (\ref{eq:torsionchangebasis}) and (\ref{eq:torsioneqtorsion}),
$$\tau_\varphi(\mathcal{C}_{[*]}, h_{[*]}b_{[*]}^Mb_{[*-1]}^M) = \dfrac{|\tor_\text{odd}(L)|}{|\tor_\text{ev}(L)|} \tau_\varphi(L, \mathcal{D})$$
With respect to these bases, the pearl differentials are given by the following matrices:
\begin{align*}
\begin{blockarray}{c cc  ccc}
 & H_0 & B_0^M & H_2 & B_2^M & s(B_1^M)\\
\begin{block}{c (cc | ccc )}
   H_1         & d_{1*}^\varphi & 0 & 0 & 0 & 0 \\
   B_1^M       & M_1 & M_2 & 0 & 0 & I\\
   s(B_0^M)    & 0 & 0 & 0 & 0 & 0\\
\cline{2-6}
   H_3         & \alpha & M_3 & d_{1*}^\varphi  & 0 & M_6\\
   s(B_2^M)    &  M_4 & M_5 & 0 & 0 & M_7\\
\end{block}
\end{blockarray}
=d^\varphi\colon \Ccal_{[1]}^\varphi \to \Ccal_{[0]}^\varphi \\
\begin{blockarray}{c ccc  cc}
 & H_1 & B_1^M & s(B_0^M) & H_3 & s(B_2^M)\\
\begin{block}{c (ccc | cc )}
   H_0         & 0 & 0 & 0 & 0 & 0 \\
   B_0^M       & 0 & 0 & I & 0 & 0\\
   \cline{2-6}
   H_2    & d_{1*}^\varphi & 0 & V_1 & 0 & 0\\
   B_2^M         & V_2 & V_3 & V_4 & 0 & I\\
   s(B_1^M)    &  0 & 0 & V_5 & 0 & 0\\
\end{block}
\end{blockarray}
=d^\varphi\colon \Ccal_{[0]}^\varphi\to \Ccal_{[1]}^\varphi
\end{align*}
By assumption on $\varphi$, $d_{1*}^\varphi$ is equal to $0$ (respectively $A_\mathcal{D}^\varphi$, $0$) on $H_0$ (resp.\ $H_1$, $H_2$).  Using $d^\varphi \circ d^\varphi =0$, the matrices simplify to
\begin{align}\label{eq:matricesd}
\begin{blockarray}{cc  ccc}
\begin{block}{(cc | ccc )}
   0 & 0 & 0 & 0 & 0 \\
   M_1 & -V_5 & 0 & 0 & I\\
   0 & 0 & 0 & 0 & 0\\
\cline{1-5}
   \alpha & -M_6V_5 & 0 & 0 & M_6\\
   -V_3M_1& V_3V_5 & 0 & 0 & -V_3\\
\end{block}
\end{blockarray} & &
\begin{blockarray}{ccc  cc}
\begin{block}{(ccc | cc )}
   0 & 0 & 0 & 0 & 0 \\
   0 & 0 & I & 0 & 0\\
   \cline{1-5}
   A_\mathcal{D}^\varphi & 0 & V_1 & 0 & 0\\
   V_2 & V_3 & V_4 & 0 & I\\
   0 & 0 & V_5 & 0 & 0\\
\end{block}
\end{blockarray}
\end{align}
Since $H_3 \subset \ker d^\varphi = \text{im } d^\varphi$ (as $L$ is $\varphi$-narrow), we must have $\alpha - M_6M_1 \neq 0$.  Bases for the pearl boundaries are given by
$$
b_{[0]} = \langle d^\varphi(H_1), d^\varphi(s(B_0^M)), d^\varphi(s(B_2^M))\rangle, \quad b_{[1]} = \langle d^\varphi(H_0), d^\varphi(s(B_1^M))\rangle
$$
Moreover, the maps $d^\varphi(H_i) \mapsto H_i, \; d^\varphi(s(B_i^M)) \mapsto s(B_i^M),$ define sections $S\colon b_{[*-1]} \to \Ccal_{[*]}$ of the sequences $$\xymatrix{0 \ar[r]& b_{[*]} \ar[r]& \Ccal_{[*]} \ar[r]^{d^\varphi} & b_{[*-1]} \ar[r]& 0}$$(see \S \ref{sec:torsion})
The associated change of bases matrices $[b_{[*]} S(b_{[*-1]}) / h_{[*]}b_{[*]}b_{[*-1]}]$, up to a reordering of the columns, are then
\begin{align*}
\begin{blockarray}{cccccc}
 & S(d^\varphi(H_0)) & d^\varphi(s B_0^M ) & d^\varphi(H_1) & d^\varphi(s B_2^M) & S(d^\varphi s B_1^M)\\
\begin{block}{c(ccccc )}
   H_0     & I & 0   & 0                     & 0 & 0 \\
   B_0^M   & 0 & I   & 0                     & 0 & 0\\
   H_2     & 0 & V_1 & A_\mathcal{D}^\varphi & 0 & 0\\
   B_2^M   & 0 & V_4 & V_2                   & I & 0\\
   s B_1^M & 0 & V_5 & 0                     & 0 & I\\
\end{block}
\end{blockarray}
= [b_{[0]} S(b_{[1]}) / h_{[0]}b_{[0]}b_{[1]}]\\
\begin{blockarray}{cccccc}
 & S(d^\varphi(H_1)) & d^\varphi(s B_1^M ) & S(d^\varphi sB_0^M) & d^\varphi(H_0) & S(d^\varphi s B_2^M)\\
\begin{block}{c(ccccc )}
   H_1     & I & 0    & 0 & 0       & 0 \\
   B_1^M   & 0 & I    & 0 & M_1     & 0\\
   sB_0^M  & 0 & 0    & I & 0       & 0\\
   H_3     & 0 & M_6  & 0 & \alpha  & 0\\
   s B_2^M & 0 & -V_3 & 0 & -V_3M_1 & I\\
\end{block}
\end{blockarray}
= [b_{[1]} S(b_{[0]}) / h_{[1]}b_{[1]}b_{[0]}]
\end{align*}
from which we get
$$\det [b_{[0]} S(b_{[1]}) / h_{[0]}b_{[0]}b_{[1]}] = \pm \det A_\mathcal{D}^\varphi$$
$$\det [b_{[1]} S(b_{[0]}) / h_{[1]}b_{[1]}b_{[0]}] = \pm(\alpha - M_6M_1)$$
The torsion is  
$$\tau_\varphi(\mathcal{C}_{[*]}, h_{[*]}b_{[*]}^Mb_{[*-1]}^M) = \dfrac{\det A_\mathcal{D}^\varphi}{\alpha - M_6M_1} \in \F /\pm 1$$

\textbf{Last step}: $(\alpha - M_6M_1) = r_\mathcal{D}^\varphi$.
Recall that the only non-trivial differential on the second page of the degree spectral sequence is
$r_\mathcal{D}^\varphi\colon H_0(L;\F) \to H_3(L;\F)$.  However, this map is defined by using an implicit identification $E^{2, \varphi} \cong H(E^{1, \varphi}, d^{1, \varphi})$.  Instead, let us compute the differential on the second page by using the definition of the spectral sequence coming from the degree filtration on the pearl complex.  To do this, we closely follow the book from McCleary \cite[Proof of Theorem 2.6]{McC-Spectral}. Set
\begin{itemize}
\item $F_p \Ccal_{p+q} = \oplus_{j \leq p} \Ccal_{p+q-j N_L} \otimes P_{jN_L}, \quad P_{jN_L} = \F t^{-j}$
\item $\dots \subset F_p\Ccal \subset F_{p+1}\Ccal \subset \dots$
\item $Z^r_{p,q}= \{x \in F_p\Ccal_{p+q} \; | \; d^\varphi(x) \in F_{p-r}\Ccal_{p+q-1}\}$
\item $B^r_{p,q} = F_p \Ccal_{p+q} \cap d^\varphi(F_{p+r}\Ccal_{p+q+1})$
\item $E^r_{p,q} = \dfrac{Z^r_{p, q}}{Z^{r-1}_{p-1, q} + B^{r-1}_{p,q}}$
\end{itemize}
Using matrix (\ref{eq:matricesd}), we get:
$$Z^2_{0,0} = ((H_2 \oplus B_2^M)\otimes P_{-2}) \oplus (S\otimes P_0)$$
where $S$ is the subvector space of  $H_0 \oplus B_0^M\oplus sB_1^M$ given by $\{ (h_0, b_0, V_5(b_0)- M_1(h_0)) \; | \; h_0 \in H_0, b_0 \in B_0^M \}$.  Note that $M_1(h_0)$ is technically speaking an element of $B_1^M$, however $sB_1^M$ is identified with $B_1^M$, it thus makes sense to write $M_1(h_0) \in sB_1^M$.
$$Z^1_{-1, 1}=(H_2\oplus B_2^M)\otimes P_{-2}$$
$$B^1_{0, 0}=(B_2^M \otimes P_{-2})\oplus d^\varphi(H_1 \oplus sB_0^M)$$
Therefore,
$$E_{0, 0}^2 = \{ h_0 - M_1(h_0) \; | \; h_0 \in H_0 \}\otimes P_0 \cong H_0(L;\F)$$
Moreover, $Z_{-2, 1}^2 = H_3\otimes P_{-4}, \quad Z^1_{-3, 2} = 0, \quad B_{-2, 1}^1 = 0$, so that $E_{-2,1}^2 = H_3\otimes P_{-4}$.
The differential on the second page is induced by $d^\varphi$, hence
$$d^{2, \varphi}\colon E_{0, 0}^2 \to E_{-2,1}^2$$
$$h_0 - M_1(h_0) \mapsto (\alpha(h_0) - M_6 M_1(h_0))\otimes t^2 = r_\mathcal{D}^\varphi \otimes t^2$$
By Lemma \ref{lem:specseqdiff}, $r_\mathcal{D}^\varphi$ and $\det A_\mathcal{D}^\varphi$ depend only on $\varphi$, not on the choice of data $\mathcal{D}$.  We conclude that
\begin{equation*}\label{eq:torsionpage3}
\tau_\varphi(\mathcal{C}_{[*]}, h_{[*]}b_{[*]}^Mb_{[*-1]}^M) = \dfrac{\det A_\mathcal{D}^\varphi}{r_\mathcal{D}^\varphi} = \dfrac{\det A_\varphi}{r_\varphi} \in \F /\pm 1
\end{equation*}
\end{proof}

\begin{rem}
The formula $\tau_\varphi(L, h_*\otimes 1)= \dfrac{|\tor_{\text{ev}}(L)|}{|\tor_{\text{odd}}(L)|}\cdot \dfrac{\det A_\varphi}{r_\varphi}$ can be written more succintly as a product of Milnor's torsion (see \S \ref{sec:milnorsdef}) of each page of the spectral sequence $E^{*, \varphi}$, relative to the bases $h_* \otimes 1$ of $H_*(L;\F)$, i.e.\ 
$$\tau_\varphi(L, h_*\otimes 1) = \tau(E^{0, \varphi}, h_*)\tau(E^{1, \varphi}, h_*)\tau(E^{2, \varphi}, h_*)$$  Indeed, the torsion of the complex $(E^{0, \varphi}, d^{0, \varphi})$ is nothing but formula (\ref{eq:torsioneqtorsion}).  The torsion of $(E^{1, \varphi}, d^{1, \varphi})$ is $\det A_\varphi$ and the one of $(E^{2, \varphi}, d^{2, \varphi})$ is $1/r_\varphi$.  It seems that this formula should generalize to spectral sequences that collapse at higher pages, however the choice of basis for $E^{k+1} \cong H_*(E^k)$ is a bit delicate.  In our specific case, the bases are always $h_*$.  In general, the presence of such a canonical choice seems unlikely.
\end{rem}
\subsection{Bifurcation and spectral sequences}
%Put some figures
In order to prove invariance of torsion in the previous section, we showed the key fact that $\alpha - M_6 M_1 = r_\varphi$ by looking closely at the degree spectral sequence.  However, the usual way to prove invariance is by studying the possible bifurcations that can happen in a generic one parameter family of complexes.  We explain here why the spectral sequence argument actually encodes all significant bifurcations.  Let us stress that we do not state any precise theorems, nor do we provide proofs.  A precise statement relating bifurcations and spectral sequences remains to be found.  The discussion here was inspired by an article from Hutchings \cite[\S 3.6]{Hut:torsion}.

For simplicity, assume that $L$ admits a perfect Morse function $f$, with a minimum $p$ and a maximum $[L]$.  Then, the matrices $M_1^f, M_6^f$ above are equal to zero and the coefficient $\alpha_f$ is given by a count of pearls from $p$ to $[L]$.  Assume that $f$ is now changed to a Morse-Smale function $h$ by adding a single pair of critical points $x, y$, such that $|x|=1, \; |y|=2$, and such that there is a unique negative gradient flow line $\gamma$ from $y$ to $x$.  In other words, $h$ is obtained from $f$ by a single birth of critical points.  For this $h$, the coefficients $\alpha_h, M_1^h, M_6^h$ may be different than the ones for $f$.  By definition, $\alpha_h - M_6^h M_1^h$ counts points in the space $\mathcal{P}(p, x; h) \times \gamma \times \mathcal{P}(y, [L]; h)$, where $\mathcal{P}(z,w; g)$ is the space of pearls from $z$ to $w$ for a function $g$.  Performing a death of $x, y$, i.e.\ slowly shrinking $\gamma$ to a degenerate critical point $z_0$, then removing that point, we get a one parameter family of functions $f_t$ and a  cobordism between $\mathcal{P}(m,[L]; f)$ and $\mathcal{P}(p, x; h) \times \gamma \times \mathcal{P}(y, [L]; h)$.  Therefore, the counts on both sides of this cobordism yield the same number, given by $r_\varphi$.
\begin{remnonum}
The main technical difficulty required to make this discussion precise is the gluing of a pearl at a degenerate critical point.  Moreover, a complete description of all possible generic bifurcations in the space of pearls of a Lagrangian 3-fold should be given, as well as a comparison of the pearl differentials on each side of these bifurcations.
\end{remnonum}
% Master file: 3-fold.tex

\section{Torsion and quantum product}\label{sec:torsionandqproduct}
\subsection{The quantum product}
In this section, we express the coefficient $r_\varphi$ from Theorem \ref{thm:torsionformula} in terms of the chain-level quantum product on the minimal pearl complex.  Under the assumptions of Theorem \ref{thm:torsionformula}, for any two classes $x, y \in H_2(L;\F)$, we have, by Theorem \ref{thm:lag3form}, $x \cdot y =0$.  Therefore, the quantum product on the minimal pearl complex $(H^{f_m}_*(L;\F), \delta^\varphi)$ defines a bilinear functional
\begin{align}\label{eq:biform}
Q_\varphi\colon H_2(L;\F) \otimes H_2(L;\F) & \to H_3(L;\F)\cong \F\\
x \otimes y & \mapsto x \circ y \nonumber
\end{align}
Recall that it depends on a choice of auxiliary Morse functions $f_m, f_1, f_2, f_3$, see \S \ref{sec:minimal}.  Compare with the quadratic form defined by Biran-Cornea in \cite[\S 4.1]{Bi-Co:lagtop}.

In order to compute the morphism $r_\varphi\colon H_0(L;\F) \to H_3(L;\F)$, we pick Poincar\'e dual bases of the free part of $H_1(L;\Z)$ and $H_2(L;\Z)$ and tensor them with $1_\F$.  Denote by $\bar{e}_i$ the associated basis vectors of $H_1(L;\F)$ and by $e_j$ the basis of $H_2(L;\F)$, as was done in \S \ref{sec:ring}.  On the chain level, we have
$\bar{e}_i \circ e_j = \bar{e}_i \cdot e_j + h.o.t.$, since the minimal quantum product is a deformation of the intersection product.  Also, $h.o.t.$ is an element of $H_2$, for degree reasons.  By assumption on the narrow representation $\varphi$, $\delta^\varphi|_{H_2}$ vanishes.  Therefore, $\delta^\varphi(\bar{e}_i \circ e_j) = \delta^\varphi(\bar{e}_i \cdot e_j)=\delta_{i,j} r_\varphi$, where $\delta_{i,j}$ is the Kronecker delta. Moreover, by definition of $\delta^\varphi$, we see that $\delta^\varphi|_{H_1} = A_\varphi$.  Hence, we have
\begin{align}\label{eq:formularphi}
\delta_{i,j} r_\varphi &= \delta^\varphi(\bar{e}_i \circ e_j) = \delta^\varphi(\bar{e}_i) \circ e_j + \bar{e}_i\circ \delta^\varphi(e_j)= A_\varphi(\bar{e}_i) \circ e_j = Q_\varphi(A_\varphi(\bar{e}_i), e_j)
\end{align}
On the other hand, $\bar{e}_i \cdot e_j = e_j \cdot \bar{e}_i$, so $\delta_{i, j} r_\varphi = \delta^\varphi(e_j \circ \bar{e}_i) = -Q_\varphi(e_j, A_\varphi(\bar{e}_i))$.  Recall that $A_\varphi\colon H_1 \to H_2$ is an isomorphism.  Therefore, $Q_\varphi$ is in fact antisymmetric.  With respect to the basis $A_\varphi(\bar{e}_i) \otimes e_j$ of $H_2 \otimes H_2$, $Q_\varphi$ is given by a diagonal matrix of dimension $b_2(L;\Z)$ with diagonal entries all equal to $r_\varphi \neq 0$.  Summarizing this discussion, and using Lemma \ref{lem:specseqdiff}, we have:
\begin{prop}\label{prop:detQ}
$\det Q_\varphi$ depends only on $\varphi$, not on the choice of Morse functions used in defining $Q_\varphi$.  Moreover, $Q_\varphi$ is a symplectic form on $H_2(L;\F)$, hence $b_2(L;\F) = b_2(L;\Z)$ is even.  In the basis $e_i\otimes e_j$ of $H_2 \otimes H_2$, $\det Q_\varphi = \dfrac{r_\varphi^{b_2(L;\Z)}}{\det A_\varphi}$.
\end{prop}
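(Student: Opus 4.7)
The plan is to assemble the chain-level identities derived in the paragraph preceding the statement and to finish with a single change-of-basis computation. First, equation \eqref{eq:formularphi} gives $Q_\varphi(A_\varphi(\bar{e}_i), e_j) = \delta_{i,j}\, r_\varphi$, while the graded commutativity $\bar{e}_i \cdot e_j = e_j \cdot \bar{e}_i$ combined with the Leibniz rule for $\delta^\varphi$ applied to $e_j \circ \bar{e}_i$ yields $Q_\varphi(e_j, A_\varphi(\bar{e}_i)) = -\delta_{i,j}\, r_\varphi$. Since $\{A_\varphi(\bar{e}_i)\}$ and $\{e_j\}$ are both bases of $H_2(L;\F)$ (the former because $A_\varphi$ is an isomorphism), bilinearity extends the identity $Q_\varphi(A_\varphi(\bar{e}_i), e_j) = -Q_\varphi(e_j, A_\varphi(\bar{e}_i))$ to the antisymmetry of $Q_\varphi$ on all of $H_2(L;\F)\otimes H_2(L;\F)$.

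Next, I would extract the determinant identity by a change of basis. Writing $A_\varphi(\bar{e}_i) = \sum_k (A_\varphi)_{ki}\, e_k$ in the basis $\{e_j\}$ and setting $[Q_\varphi]_{kj} := Q_\varphi(e_k, e_j)$, the relation $Q_\varphi(A_\varphi(\bar{e}_i), e_j) = \delta_{i,j}\, r_\varphi$ translates to the matrix identity $(A_\varphi)^T [Q_\varphi] = r_\varphi\, I$. Taking determinants gives $\det(A_\varphi)\cdot\det Q_\varphi = r_\varphi^{b_2(L;\F)}$, i.e.\ $\det Q_\varphi = r_\varphi^{b_2(L;\F)}/\det A_\varphi$, as claimed. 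Since $r_\varphi$ is an isomorphism between one-dimensional spaces, $r_\varphi \neq 0$, so this identity also certifies that $Q_\varphi$ is non-degenerate; together with antisymmetry and the standing assumption $\textnormal{char}\,\F \neq 2$, this forces $b_2(L;\F)$ to be even, and the assumption that $\textnormal{char}\,\F$ does not divide $|\tor(L)|$ then gives $b_2(L;\F) = b_2(L;\Z)$.

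Finally, independence of $\det Q_\varphi$ from the choices of auxiliary Morse functions is immediate from the displayed formula together with Lemma \ref{lem:specseqdiff}, which asserts that $r_\varphi$ and $\det A_\varphi$ depend only on $\varphi$ in $\F^\times/\pm 1$ (the evenness of $b_2(L;\Z)$ eliminates any residual sign ambiguity in $r_\varphi^{b_2(L;\Z)}$). There is no real obstacle here: the heart of the argument is already carried out in the paragraph preceding the statement, and the only delicate point is correctly placing the transpose in the change-of-basis identity so that the formula comes out in the form stated in the proposition.
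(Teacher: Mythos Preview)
Your proposal is correct and follows essentially the same approach as the paper: the argument is precisely the one outlined in the paragraph preceding the proposition (equation \eqref{eq:formularphi}, the antisymmetry via graded commutativity, and the appeal to Lemma \ref{lem:specseqdiff}). The only difference is cosmetic: where the paper says that $Q_\varphi$ is diagonal with entries $r_\varphi$ in the mixed basis $A_\varphi(\bar{e}_i)\otimes e_j$ and leaves the change of basis implicit, you write out the matrix identity $(A_\varphi)^T[Q_\varphi]=r_\varphi I$ explicitly before taking determinants.
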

By Lemma \ref{lem:zbasisf}, any Poincar\'e dual basis $h_*\otimes 1$ of $H_*(L;\F)$ are equivalent.  From Theorem \ref{thm:torsionformula}, we conclude that for every narrow representation $\varphi$ that vanishes on the third page, the quantum Reidemeister torsion satisfies 
\begin{equation}\label{eq:torsionE2}
\tau_\varphi(L, h_*\otimes 1)^{b_2(L;\Z)}= \left(\dfrac{|\tor_{\text{ev}}(L)|}{|\tor_{\text{odd}}(L)|}\right)^{b_2(L;\Z)} \dfrac{\det A_\varphi^{b_2(L;\Z)-1}}{\det Q_\varphi}
\end{equation}
explaining a formula that was first  observed in a simpler context by the author in \cite[\S 6]{Cha:torsion}.  

\begin{rem}
The coefficient $r_\varphi$ from Theorem \ref{thm:torsionformula} was shown to be equal to $\alpha - M_6M_1 = Q_\varphi(A_\varphi(\bar{e}_i), e_i)$.  Note that the term $\alpha - M_6M_1$ involves, by definition, moduli spaces of pearl trajectories using discs of Maslov index four with two marked points, as well as pearls with two discs of Maslov index two joined by a flow line.  On the other hand, both $Q_\varphi$ and $A_\varphi$ involve only discs of Maslov index two, with either two or three marked points.
\end{rem}

\section{The space of complex representations}\label{sec:complexrep}
\subsection{The superpotential}\label{sec:superpotential}
We momentarily drop the assumption that $L$ is a 3-manifold and suppose only that $L$ is a closed, orientable, spin, and monotone Lagrangian submanifold of $(M, \omega)$.  We let $\dim L = n$.  Our presentation of the superpotential will follow closely Biran-Cornea's approach \cite[\S 2.4 \& 3.3]{Bi-Co:lagtop}.  

Let $\hom_0(H_1; \C^\times)$ be the set of representations of $H_1(L;\Z)$ that are equal to 1 on the torsion subgroup.  The space of wide representations is defined as $\mathcal{W}_1 = \{ \varphi \in \hom_0(H_1; \C^\times) \; | \; QH^\varphi(L) \cong H_*(L;\C)\}$.  This space is in fact an algebraic subvariety of $\hom_0(H_1; \C^\times)$, by \cite[Proposition 3.1.1]{Bi-Co:lagtop}.  Similarly, the space of narrow representations is $\mathcal{N}_1 = \{ \varphi \in \hom_0(H_1; \C^\times) \; | \; QH^\varphi(L)=0 \}$.  The narrow/wide conjecture states that the complement of $\mathcal{W}_1$ is $\mathcal{N}_1$.
%Maybe put this in a remark at the end of the next section...
%Recall the minimal degree spectral sequence $\{\mathcal{E}^{*, \varphi}, \delta^{i, \varphi}\}$ from \S(\textcolor{red}{PUT A REFERENCE TO THE SECTION}).
%Finally, consider the finite filtration $0=\mathcal{F}_0 \subset \mathcal{F}_1 \subset \cdots \subset \mathcal{N}_1$ defined by
%\begin{equation}
% \mathcal{F}_i = \{ \varphi \in \hom_0(H_1; \C^\times) \; | \; H_*(\mathcal{E}^{i, \varphi}_{*,*}) = 0\}
%\end{equation}

For $A \in \pi_2(M, L)$ such that $\mu(A) = 2$, denote by $m_0(L, A)$ the oriented number of pseudoholomorphic discs in the homotopy class $A$ whose boundary goes through a generic point of $L$. By Gromov compactness and monotonicity of $L$, there are only a finite number of classes $A$ for which this number is not zero, and it does not depend on the choice of compatible almost complex structure.  The Landau-Ginzburg superpotential of $L$ is defined by
\begin{equation}\label{eq:superpotential}
\SP\colon \hom_0(H_1; \C^\times) \to \C, \quad \varphi \mapsto \sum_{A \in \pi_2(M, L)} m_0(L, A)\varphi(\partial A)
\end{equation}
where $\partial\colon \pi_2(M, L) \to \pi_1(L)$ is the connecting morphism and $\partial A$ is considered as the image of the Hurewicz morphism $\pi_1(L) \to H_1(L;\Z)$.  Note that $\SP(\varphi)$ is called the obstruction number of $L$ (with respect to $\varphi$) and is sometimes denoted by $m_0(L;\varphi)$.  Pick $\bar{e}_1, \dots, \bar{e}_{b_1}$, a basis of the free part of $H_1(L;\Z)$ and $z_1, \dots, z_{b_1},$ the corresponding dual basis of $\hom_0(H_1; \C^\times) \cong (\C^\times)^{b_1}$, where $b_1 = b_1(L;\Z)$ is the first Betti number.  In those coordinates, the superpotential is a Laurent polynomial given by
$$\SP(z_1, \dots, z_{b_1}) = \sum_{A \in \pi_2(M, L)}m_0(L, A)z_1^{(\partial A)_1} \cdots z_{b_1}^{(\partial A)_{b_1}}$$
where $\partial A = \sum_i \bar{e}_i (\partial A)_i$.  Pick a basis $e_1, \dots, e_{b_{n-1}}$ of $H_2(L;\Z)/\tor$, Poincar\'e dual to $\bar{e}_i$.  Let $\Ccal_{min}^\varphi = (H_*(L;\C), \delta^\varphi)$ be the minimal pearl complex and consider the chain-level quantum product $\circ$ on this complex, restricted to $H_{n-1}$.
\begin{prop}[Propositions 3.3.1 and 3.3.4 in \cite{Bi-Co:lagtop}]\label{prop:BiCo} For $\varphi = (z_1, \dots, z_{b_1})$, we have
\begin{itemize}
 \item $\delta_{1}^\varphi(e_i) = z_i \dfrac{\partial \SP}{\partial z_i}(z_1, \dots, z_{b_1})[L]$.
 \item If $QH^\varphi(L;\C) \neq 0$, then $\varphi$ is a critical point of $\SP$.
 \item For $\varphi \in \Crit \SP$,  $e_i \circ e_j + e_j \circ e_i = (-1)^n z_i z_j \dfrac{\partial \SP}{\partial z_i \partial z_j}(z_1, \dots, z_{b_1})[L]$
\end{itemize}
In particular, $\mathcal{W}_1 \subset \Crit \SP$.  Moreover, if $H_*(L;\R)$ is generated as a ring by $H_{n-1}(L;\R)$, then $\mathcal{W}_1 = \Crit \SP$ and the narrow-wide conjecture is true.
\end{prop}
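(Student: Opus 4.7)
The four claims can be proved one after another, using the divisor axiom for Maslov-two open Gromov--Witten invariants as the main tool and the Leibniz rule for $\delta^\varphi$ on the minimal pearl complex. For the first bullet, since $|e_i|=n-1$ and $N_L=2$, the differential $\delta_1^\varphi$ lands in $H_n(L;\C)$, so $\delta_1^\varphi(e_i)=c_i[L]$. The coefficient $c_i$ counts Maslov-two $J$-discs with two boundary marked points, one constrained to lie on the codimension-one cycle $e_i$ and the other at a generic point of $L$, weighted by $\varphi(\partial A)$; the divisor axiom applied to the $e_i$-constraint gives
\[
c_i = \sum_{A,\,\mu(A)=2} m_0(L,A)\,(\partial A\cdot e_i)\,\varphi(\partial A) = \sum_A m_0(L,A)\,(\partial A)_i\,\varphi(\partial A),
\]
and a direct comparison with the Laurent expansion of $\SP$ identifies this with $z_i\,\partial\SP/\partial z_i(\varphi)$. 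The second bullet then follows by contraposition: if some partial derivative is nonzero at $\varphi$, the higher components $\delta_k^\varphi(e_i)$ with $k\geq 2$ land in $H_{n+2(k-1)}=0$, so $\delta^\varphi(e_i)=\delta_1^\varphi(e_i)$ is a nonzero multiple of $[L]$. Hence $[L]$ is a boundary, and since it is the unit of the quantum product, the Leibniz rule forces every cycle $x=x\circ[L]$ to be a boundary as well, so $QH^\varphi(L;\C)=0$.

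For the third bullet, assume $\varphi\in\Crit\SP$. By the first bullet $\delta_1^\varphi(e_i)=0$, so by Leibniz $e_i\circ e_j$ is a cycle. Decompose it as $e_i\circ e_j=e_i\cdot e_j+Q_{ij}[L]$, with the Morse intersection piece in $H_{n-2}$ and the quantum correction $Q_{ij}$ counting pearls consisting of a single Maslov-two disc with three boundary marked points constrained by $e_i$, $e_j$, and a generic point. Two applications of the divisor axiom yield
\[
Q_{ij}=\sum_A m_0(L,A)\,(\partial A)_i(\partial A)_j\,\varphi(\partial A),
\]
which at a critical point of $\SP$ coincides with $z_iz_j\,\partial^2\SP/(\partial z_i\partial z_j)(\varphi)$: the discrepancy between the monomial double derivative and the iterated divisor count involves only single partial derivatives, which vanish by hypothesis. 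Symmetrizing in $i$ and $j$, the $H_{n-2}$ contributions cancel by graded skew-commutativity of the Morse intersection on classes of degree $n-1$, and matching the remaining orientation signs with the Koszul conventions produces the stated factor $(-1)^n$.

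The last assertion follows by combining the three bullets. One inclusion is the second bullet; conversely, assume $\varphi\in\Crit\SP$ and that $H_*(L;\R)$ is generated as a ring by $H_{n-1}$. Since $\delta_1^\varphi$ is a derivation of the Morse intersection product (as in Buhovski's theorem) and vanishes on $H_{n-1}$ by the first bullet, it vanishes on all ring words in $H_{n-1}$ and hence on all of $H_*(L;\C)$, so $E^{2,\varphi}\cong H_*(L;\C)$. The higher-page differentials are then derivations for the induced quantum product, which is a deformation of the intersection product and is therefore also generated by $H_{n-1}$; an inductive page-by-page argument extends vanishing through the whole spectral sequence and yields $QH^\varphi(L;\C)\cong H_*(L;\C)$. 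The main obstacle in this plan is the sign accounting in the third bullet, where pinning down the factor $(-1)^n$ (and the precise relation between the chain-level count $Q_{ij}+Q_{ji}$ and a single second partial derivative) requires a careful choice of orientation on the moduli of Maslov-two discs with three boundary marked points; a secondary subtlety is the inductive step in the fourth bullet, where one must check that a derivation vanishing on degree-$(n-1)$ generators of the intersection ring continues to vanish under the deformed product on each successive page.
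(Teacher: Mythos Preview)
The paper does not prove this proposition at all: it is quoted directly from Biran--Cornea \cite{Bi-Co:lagtop}, Propositions~3.3.1 and~3.3.4, so there is no proof in the paper to compare your attempt against. That said, your sketch for the first two bullets is correct and is essentially the standard argument (the same divisor computation appears in the paper's Lemma~\ref{lem:d1dual}).

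Two points deserve comment. In the third bullet, your formula $Q_{ij}=\sum_A m_0(L,A)(\partial A)_i(\partial A)_j\varphi(\partial A)$, obtained by ``two applications of the divisor axiom'', is manifestly symmetric in $i,j$; this would give $Q_{ij}+Q_{ji}=2Q_{ij}$ and hence an unwanted factor of~$2$. The point you flag as ``the main obstacle'' is a genuine one: the three boundary marked points in the pearl product are \emph{cyclically ordered}, and the divisor-type reduction must track where the divisor insertion sits relative to the other two marks. Carrying this bookkeeping through is what eliminates the factor of~$2$ and produces the sign $(-1)^n$; without it the computation is not yet correct.

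For the last assertion your page-by-page induction is more delicate than necessary, and the ``secondary subtlety'' you mention (that higher-page differentials remain derivations for a product still generated in degree $n-1$) can be bypassed. Work directly on the minimal complex: for $k\geq 2$ one has $\delta_k^\varphi(e_i)\in H_{n+2k-2}=0$, so $\delta^\varphi(e_i)=\delta_1^\varphi(e_i)=0$ at a critical point, and $\delta^\varphi([L])=0$ for degree reasons. Since $\circ$ deforms the intersection product, the hypothesis that $H_*(L;\R)$ is generated by $H_{n-1}$ under~$\cdot$ implies that $\mathcal{C}_{\min}^\varphi$ is generated over $\Lambda$ by $H_{n-1}$ and $[L]$ under~$\circ$ (a straightforward filtration argument). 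The Leibniz rule for the full $\delta^\varphi$ with respect to $\circ$ then gives $\delta^\varphi\equiv 0$ in one stroke, hence $QH^\varphi(L)\cong H_*(L;\C)\otimes\Lambda$ and $\varphi\in\mathcal{W}_1$. This avoids any discussion of multiplicativity on successive pages.
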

Associated to a Lagrangian of minimal Maslov class two, there is a discriminant function $\Delta\colon \mathcal{W}_1 \to \C$ (see \cite[\S 4.3]{Bi-Co:lagtop}) given by 
\begin{equation}\label{eq:discriminant}
\Delta(z_1, \dots, z_{b_1}) = (-1)^{nb_1 + 1}z_1^2 \cdots z_{b_1}^2 \det \left( \dfrac{\partial^2 \SP}{\partial z_i \partial z_j} \right)
\end{equation}
Finally, we recall a theorem of Auroux, Kontsevich, and Seidel (see Auroux \cite[\S 6]{Aur:t-duality} and Sheridan \cite[\S 2.9]{She:Fano} for details).  Assume that $(M, \omega)$ is a closed symplectic manifold and denote by $c_1$ the first Chern class of $M$.  Consider the quantum cohomology $QH(M, \omega)$, endowed with the quantum product $\star$, as a complex vector space of dimension $D = \sum_0^{2n} b_i(M;\C)$ and denote by $\lambda_1, \dots, \lambda_{D}$ the generalized eigenvalues of the endomorphism $c_1 \star \colon QH(M, \omega) \to QH(M, \omega)$.  Then
\begin{thm}[Auroux, Kontsevich, Seidel]
 Consider a representation $\varphi = (z_1, \dots, z_{b_1}) \in \hom_0(H_1; \C^\times)$.  If $QH^\varphi(L) \neq 0$, then there exists $i$ such that $\SP(\varphi) = \lambda_i$.  In particular, $\SP(\mathcal{W}_1) \subset \SP(\hom_0(H_1; \C^\times) \backslash \mathcal{N}_1) \subset \{ \lambda_1, \dots, \lambda_{D} \}$.
\end{thm}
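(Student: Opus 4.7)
The plan is the standard closed-open strategy. First I would invoke or construct a unital $\C$-algebra morphism
$$i^L\colon QH(M;\C) \longrightarrow QH^\varphi(L;\C),$$
intertwining the two quantum products; in the pearl framework used throughout the paper, $i^L$ would be defined by counting pearly trajectories that carry one extra interior marked point constrained to lie on a generic pseudo-cycle representative of the input class in $M$. References such as Auroux and Sheridan construct essentially this morphism in closely related monotone settings, and adapting it to our situation only requires twisting each such pearl by the usual weight $\varphi(\partial A)$.

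The geometric key would be the identity $i^L(c_1(M)) = \SP(\varphi)\cdot [L]$, where $[L]$ denotes the unit of $QH^\varphi(L)$. By monotonicity and the dimensional conditions that define $i^L$ on $H_{2n-2}(M;\C)$ landing in $H_n(L;\C)=\C[L]$, only pseudoholomorphic discs of Maslov index two contribute; the divisor relation $\langle c_1(M), A\rangle = \mu(A)/2 = 1$ converts the interior incidence constraint into a multiplicative factor of $1$. The remaining count of Maslov-two discs through a generic boundary point, weighted by $\varphi(\partial A)$, is exactly $\SP(\varphi)$ by \eqref{eq:superpotential}.

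With those two ingredients in hand, the theorem follows by elementary linear algebra. Let $p(t) = \prod_{i=1}^D (t - \lambda_i)$ be the characteristic polynomial of the endomorphism $c_1\star$ of $QH(M;\C)$. Cayley-Hamilton gives $p(c_1\star) = 0$ as an operator, and applying this operator to the unit yields $p(c_1) = 0$ in the ring $QH(M;\C)$. Pushing through the ring morphism $i^L$ and using the computation above, $p(\SP(\varphi))\cdot [L] = 0$ in $QH^\varphi(L)$. When $QH^\varphi(L)\neq 0$ the unit $[L]$ is nonzero (a unital ring is trivial iff its unit vanishes), forcing $p(\SP(\varphi)) = 0$ in $\C$ and hence $\SP(\varphi) \in \{\lambda_1,\dots,\lambda_D\}$. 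The inclusion chain $\SP(\mathcal{W}_1) \subset \SP(\hom_0(H_1;\C^\times) \backslash \mathcal{N}_1) \subset \{\lambda_i\}$ is then immediate: the first inclusion is tautological since $\varphi \in \mathcal{W}_1$ gives $QH^\varphi(L)\cong H_*(L;\C) \neq 0$, and the second is what was just established.

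The main obstacle is of course the existence and properties of $i^L$: namely unitality, compatibility with the local system $\varphi$, and the identification $i^L(c_1) = \SP(\varphi)[L]$. All the ingredients — pearly moduli spaces with one interior marked point, their Gromov compactifications, coherent orientations via the chosen spin structure on $L$, and the divisor axiom for open Gromov-Witten invariants — are standard in the Biran-Cornea framework, but assembling them into a careful proof of the ring-morphism property is technical; in practice, I would cite Auroux and Sheridan rather than reproduce the construction here.
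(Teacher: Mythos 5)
Your proposal is correct and follows exactly the standard closed--open argument: the paper itself offers no proof of this theorem, deferring entirely to Auroux \cite[\S 6]{Aur:t-duality} and Sheridan \cite[\S 2.9]{She:Fano}, and your sketch (unital ring morphism $i^L\colon QH(M;\C)\to QH^\varphi(L;\C)$, the identity $i^L(c_1)=\SP(\varphi)[L]$ via the divisor relation, then Cayley--Hamilton applied to the unit) is precisely the argument those references carry out. Your identification of the construction of $i^L$ and its ring-morphism property as the genuinely technical step, to be cited rather than reproved, matches the paper's treatment.
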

Since $\SP$ is a holomorphic function, we get:
\begin{cor}\label{cor:narrowexist}
 If $\SP$ is not constant, then there exist narrow representations.
\end{cor}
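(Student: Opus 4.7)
The plan is to argue by contradiction using the Auroux--Kontsevich--Seidel theorem stated just before the corollary. Suppose that $\mathcal{N}_1 = \emptyset$, i.e.\ no representation $\varphi \in \hom_0(H_1; \C^\times)$ is narrow. Then every $\varphi$ satisfies $QH^\varphi(L) \neq 0$, so by the cited theorem we obtain the inclusion
\[
\SP\bigl(\hom_0(H_1; \C^\times)\bigr) \subset \{ \lambda_1, \dots, \lambda_D \},
\]
i.e.\ the image of $\SP$ is contained in a finite subset of $\C$.

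Next I would invoke the basic fact from complex analysis that a holomorphic function with finite image on a connected complex manifold is constant. Concretely, $\hom_0(H_1; \C^\times) \cong (\C^\times)^{b_1}$ is a connected (indeed, irreducible) complex variety, and $\SP$ is a Laurent polynomial in the coordinates $z_1, \dots, z_{b_1}$, hence holomorphic. Its image is therefore connected; a connected subset of a finite discrete set of $\C$ is a single point, so $\SP$ is constant. This contradicts the hypothesis that $\SP$ is not constant, and we conclude that $\mathcal{N}_1$ must be non-empty.

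There is essentially no obstacle here: the only ingredients are the Auroux--Kontsevich--Seidel inclusion (already quoted), the holomorphy of $\SP$ (immediate from its definition as a Laurent polynomial), and connectedness of the complex torus $(\C^\times)^{b_1}$. The corollary is a direct formal consequence, and the slight subtlety worth noting explicitly in the write-up is just the passage from ``finite image of a holomorphic function on a connected complex manifold'' to ``constant'', which can be stated in one line.
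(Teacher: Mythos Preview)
Your argument is correct and is exactly the one the paper intends: the Auroux--Kontsevich--Seidel theorem forces the image of $\SP$ into a finite set whenever $\mathcal{N}_1=\emptyset$, and holomorphy of the Laurent polynomial $\SP$ on the connected torus $(\C^\times)^{b_1}$ then makes it constant. The paper's one-line ``Since $\SP$ is a holomorphic function, we get:'' is precisely this reasoning, and you have simply spelled it out.
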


\subsection{Applications to Lagrangian 3-manifolds: Proof of Theorem \ref{thm:main}}\label{sec:mainproof}
We now come back to monotone, orientable, closed Lagrangian 3-manifolds.
\begin{proof}
By the proof of Theorem \ref{thm:lag3form} and Proposition \ref{prop:detQ}, the collapsing page of the degree spectral sequence depends only on the parity of $b_2(L;\Z)$, not on the actual narrow representation, and it restricts the intersection 3-form $I_\C$ as desired.  

If $b_2$ is odd, the second page collapses, which means, by Proposition \ref{prop:BiCo}, that $\varphi \not\in \Crit \SP$.  Then, torsion is given by formula (\ref{eq:torsionE1}).  If $b_2 \geq 3$, then Propositions \ref{prop:ringgeneration} and \ref{prop:BiCo} imply that $\mathcal{W}_1 = \Crit \SP$.

As for the second point,  since $L$ is $\varphi$-narrow and $b_2(L;\Z)$ is even, the only possibility is that the degree spectral sequence collapses at the third page.  By definition, this means that $H_*(E^{1, \varphi}) \neq 0$, so that $\varphi \in \Crit \SP$ and $\hom_0(H_1; \C^\times) \backslash \Crit \SP = \emptyset$, by the first point of Proposition \ref{prop:BiCo}.  Therefore, $\SP$ is constant.  Formulae (\ref{eq:torsionE2}) and (\ref{eq:discriminant}) conclude the proof.
\end{proof}

% \section{Examples}
% \textcolor{blue}{
% Try to look at monotone Lags in subcritical Weinstein man. of dimension 6.  According to Cieliebak, they are always products $M^4 \times \C$ with a product symplectic structure, so obvious Lags are $\Sigma_g \times S^1$, but the spectral sequence vanishes on the second page because of the disc that bounds the $S^1$ factor.  Perhaps there are Lags that are not product in there...  Like the Lag cobordisms between $T^2$ and $S^2$ in the Lefschetz fibrations paper of Biran Cornea...}

\bibliographystyle{alpha}
\bibliography{../../../biblio_latex/bibliography}
\end{document}